\begin{document}
\topmargin= -.2in \baselineskip=20pt

\newtheorem{theorem}{Theorem}[section]
\newtheorem{proposition}[theorem]{Proposition}
\newtheorem{lemma}[theorem]{Lemma}
\newtheorem{corollary}[theorem]{Corollary}
\newtheorem{conjecture}[theorem]{Conjecture}
\theoremstyle{remark}
\newtheorem{remark}[theorem]{Remark}

\title {Deformation of $\ell$-adic Sheaves with Undeformed Local Monodromy
\thanks{I would like to thank S. Bloch and H. Esnault for patiently explaining to me
their work \cite{BE}, which inspires this paper. In May 2005, H.
Esnault invited me to visit Essen and suggested me to work on the
problem studied in this paper. It is my great pleasure to thank her
for all the help. The research is partly supported by the NSFC
(10525107, 10921061) and 973 (2011CB935904).}}

\author {Lei Fu\\
{\small Chern Institute of Mathematics and LPMC, Nankai University,
Tianjin 300071, P. R. China}\\
{\small leifu@nankai.edu.cn}}

\date{}
\maketitle

\begin{abstract}
Let $X$ be a smooth connected algebraic curve over an algebraically
closed field $k$. We study the deformation of $\ell$-adic Galois
representations of the function field of $X$ while keeping the local
Galois representations at all places undeformed.

\noindent {\bf Mathematics Subject Classification:} 14D15.

\end{abstract}

\section*{Introduction}

In this paper, we work over an algebraically closed field $k$ of
characteristic $p$ even though our results can be extended to
non-algebraically closed fields. Let $X$ be a smooth connected
projective curve over $k$, let $S$ be a finite closed subset of $X$,
and let $\ell$ be a prime number distinct from $p$. For any $s\in
S$, let $\eta_s$ be the generic point of the strict henselization of
$X$ at $s$. A lisse $\overline{\mathbb Q}_\ell$-sheaf $\mathscr F$
on $X-S$ is called {\it physically rigid} if for any lisse
$\overline{\mathbb Q}_\ell$-sheaf $\mathscr G$ on $X-S$ with the
property $\mathscr F|_{\eta_s}\cong \mathscr G|_{\eta_s}$ for any
closed point $s$ in $S$, we have $\mathscr F\cong \mathscr G$. The
lisse $\overline{\mathbb Q}_\ell$-sheaf $\mathscr F$ on $X-S$
corresponds to a Galois representation
$$\rho:\mathrm{Gal}(\overline
{K(X)}/K(X))\to\mathrm{GL}(n,\overline{\mathbb Q}_\ell)$$ of the
function field $K(X)$ unramified everywhere on $X-S$. $\mathscr F$
is physically rigid if and only if for any Galois representation
$\rho'$ of $\mathrm{Gal}(\overline {K(X)}/K(X))$ such that $\rho'$
and $\rho$ induce isomorphic Galois representations of the local
field obtained by taking completion of $K(X)$ at any place of
$K(X)$, we have $\rho\cong \rho'$. In another words, a physically
rigid sheaf $\mathscr F$ is completely determined by all the Galois
representations of local fields defined by $\mathscr F$. To get a
good notion of rigidity, we have to assume $X=\mathbb P_k^1$.
Indeed, if $X$ has genus $g\geq 1$, then there exists a lisse
$\overline{\mathbb Q}_\ell$-sheaf $\mathscr L$ of rank $1$ on $X$
such that $\mathscr L^{\otimes n}$ is nontrivial for all $n$. For
any lisse $\overline{\mathbb Q}_\ell$-sheaf $\mathscr F$ on $X-S$,
the lisse sheaf $\mathscr G=\mathscr F\otimes\mathscr L$ is not
isomorphic to $\mathscr F$ since they have non-isomorphic
determinant, but $\mathscr F|_{\eta_s}\cong\mathscr G|_{\eta_s}$ for
all $s\in X$. Hence $\mathscr F$ is not rigid. A lisse
$\overline{\mathbb Q}_\ell$-sheaf $\mathscr F$ on $X-S$ is called
{\it cohomologically rigid} if we have
$$H^1(X, j_{\ast}\mathscr End(\mathscr F))=0,$$ where
$j:X-S\hookrightarrow X$ is the canonical open immersion. In
\cite[5.0.2]{K}, Katz shows that for an irreducible lisse sheaf,
cohomological rigidity implies physical rigidity. It is conjectured
that the converse is true.

In \cite[Theorem 4.10]{BE}, Bloch and Esnault study deformations of
locally free $\mathscr O_{X-S}$-modules provided with connections
while keeping local (formal) data undeformed, and they prove that
physical rigidity and cohomological rigidity are equivalent for
locally free $\mathscr O_{X-S}$-modules provided with connections.
Motivated by their results, in this paper, we study the deformation
of lisse $\ell$-adic sheaves while keeping the local monodromy
undeformed. More precisely,  Let $F$ be any one of the following
fields: a finite extension of the finite field $\mathbb F_\ell$ with
$\ell$ elements, an algebraic closure $\overline{\mathbb F}_\ell$ of
$\mathbb F_\ell$, a finite extension of the $\ell$-adic number field
$\mathbb Q_\ell$, or an algebraic closure $\overline{\mathbb
Q}_\ell$ of $\mathbb Q_\ell$. Let $\mathscr F$ be a lisse $F$-sheaf
on $X-S$. In this paper, we study the deformation of $\mathscr F$ so
that $\mathscr F|_{\eta_s}$ $(s\in S)$ remain undeformed.

Let $\eta$ be a generic point of $X$. We define an
$F$-representation of $\pi_1(X-S,\bar \eta)$ of rank $r$ to be a
homomorphism $\rho:\pi_1(X-S,\bar \eta)\to \mathrm{GL}(F^r)$ such
that the following conditions holds: If $F$ is a finite extension of
$\mathbb F_\ell$ or $\mathbb Q_\ell$, we require $\rho$ to be
continuous, where the topology on $\mathrm{GL}(F^r)$ is the discrete
topology if $F$ is a finite field, and is induced by the the
$\ell$-adic topology on $F$ if $F$ is a finite extension of $\mathbb
Q_\ell$; if $F$ an algebraic closure of $\mathbb F_\ell$ (resp.
$\mathbb Q_\ell$), we require the existence of a finite extension
$E$ of $\mathbb F_\ell$ (resp. $\mathbb Q_\ell)$ such that $\rho$
factors through a continuous homomorphism $\pi_1(X-S,\bar\eta)\to
\mathrm{GL}(E^r)$. Let $V=\mathscr F_{\bar\eta}$. Then the lisse
$F$-sheaf $\mathscr F$ on $X-S$ defines an $F$-representation
$$\rho_0:\pi_1(X-S,\bar \eta)\to \mathrm{GL}(V).$$ Fix an embedding
$\mathrm{Gal}(\bar\eta_s/\eta_s)\hookrightarrow \pi_1(X-S,\bar\eta)$
for each $s\in S$. Our problem can be interpreted as the deformation
of the representation $\rho_0$ so that
$\rho_0|_{\mathrm{Gal}(\bar\eta_s/\eta_s)}$ $(s\in S)$ remain
undeformed. Our treatment is similar to Mazur's theory of
deformation of Galois representations (\cite{M}).

Denote by $\mathcal C$ the category of Artinian local $F$-algebras
with residue field $F$. Morphisms in $\mathcal C$ are $F$-algebra
homomorphisms. Using the fact that the maximal ideal of an Artinian
local ring coincides with its nilpotent radical, one can check that
morphisms in $\mathcal C$ are necessarily local homomorphisms and
they induce the identity $\mathrm{id}_F$ on the residue field. If
$A$ is an object in $\mathcal C$, we denote by $\mathfrak m_A$ the
maximal ideal of $A$. A homomorphism $\rho:\pi_1(X-S,\bar\eta)\to
\mathrm{GL}(A^r)$ is called a representation if by regarding $A^r$
as a finite dimensional $F$-vector space, $\rho$ is an
$F$-representation of $\pi_1(X-S,\bar\eta)$.

Let $\rho_1,\rho_2: \pi_1(X-S,\bar\eta)\to \mathrm{GL}(A^r)$ be two
representations. For any subgroup $G$ of $\pi_1(X-S,\bar\eta)$, we
say $\rho_1|_G$ and $\rho_2|_G$ are {\it equivalent} if there exists
$P\in \mathrm{GL}(A^r)$ such that $P^{-1}\rho_1(g)P=\rho_2(g)$ for
all $g\in G$. We say $\rho_1|_G$ and $\rho_2|_G$ are {\it strictly
equivalent} if the above condition holds for some $P$ with the
property $P\equiv I\mod \mathfrak m_A$. We write
$\rho_1|_G\cong\rho_2|_G$ if they are equivalent.

Fix an $F$-representation $\rho_0: \pi_1(X-S,\bar\eta)\to
\mathrm{GL}(F^r)$. For any $A\in\mathrm{ob}\,\mathcal C$, define
$R(A)$ to be the set of strict equivalent classes of representations
$\rho:\pi_1(X-S,\bar\eta)\to\mathrm{GL}(A^r)$ such that
$\rho\equiv\rho_0\mod \mathfrak m_A$ and
$\rho|_{\mathrm{Gal}(\bar\eta_s/\eta_s)}\cong
\rho_0|_{\mathrm{Gal}(\bar\eta_s/\eta_s)}$ for all $s\in S$. Each
element in $R(A)$ is called a {\it deformation} of $\rho_0$ with
$\rho_0|_{\mathrm{Gal}(\bar\eta_s/\eta_s)}$ being undeformed. The
main result of this paper is the following theorem.

\begin{theorem} Assume all elements in the set
$\mathrm{End}_{F[\pi_1(X-S,\bar\eta)]}(F^r)$ are scalar
multiplications, where $F^r$ is considered as an
$F[\pi_1(X-S,\bar\eta)]$-module through the representation $\rho_0$.
(This condition holds if $\rho_0$ is absolutely irreducible by
Schur's lemma).

(i) The functor $R:\mathcal C\to(\mathrm{Sets})$ is
pro-representable, that is, there exist a complete noetherian local
$F$-algebra $R_{\mathrm{univ}}$ with residue field $F$ and a
homomorphism
$$\rho_{\mathrm{univ}}:\pi_1(X-S,\bar\eta)\to\mathrm{GL}(R^r_{\mathrm{univ}})$$
with the properties $\rho_{\mathrm{univ}}\equiv\rho_0\mod \mathfrak
m_{R_{\mathrm{univ}}}$ and
$\rho_{\mathrm{univ}}|_{\mathrm{Gal}(\bar\eta_s/\eta_s)}\cong
\rho_0|_{\mathrm{Gal}(\bar\eta_s/\eta_s)}$ for all $s\in S$, such
that the homomorphism
$\pi_1(X-S,\bar\eta)\to\mathrm{GL}((R_{\mathrm{univ}}/\mathfrak
m_{R_{\mathrm{univ}}}^m)^r)$ induced by $\rho_{\mathrm{univ}}$ are
representations for all positive integers $m$, and for any element
$\rho:\pi_1(X-S,\bar\eta)\to\mathrm{GL}(A^r)$ in $R(A)$, there
exists a unique local $F$-homomorphism $R_{\mathrm {univ}}\to A$
which brings $\rho_{\mathrm{univ}}$ to $\rho$. We call
$R_{\mathrm{univ}}$ the {\bf universal deformation ring} of $\rho_0$
and $\rho_{\mathrm{univ}}$ the {\bf universal deformation}.

(ii) Let $F[\epsilon]$ be the ring of dual numbers over $F$. The
tangent space $R(F[\epsilon])$ of the functor $R$ is isomorphic to
$H^1(X, j_\ast \mathscr End(\mathscr F))$, where
$j:X-S\hookrightarrow X$ is the open immersion, and $\mathscr F$ is
the lisse $F$-sheaf on $X$ corresponding to the representation
$\rho_0$.

(iii) $R_{\mathrm{univ}}$ is isomorphic to the formal power series
ring $F[[t_1,\ldots, t_m]]$ with $m=\mathrm{dim}_F H^1(X, j_\ast
\mathscr End(\mathscr F)).$

(iv) If we don't assume elements in
$\mathrm{End}_{F[\pi_1(X-S,\bar\eta)]}(F^r)$ are scalar
multiplications, then $F$ has a pro-representable hull, that is,
there exist a complete noetherian local $F$-algebra
$R_{\mathrm{univ}}$ with residue field $F$ and a homomorphism
$\rho_{\mathrm{univ}}:\pi_1(X-S,\bar\eta)\to
\mathrm{GL}(R_{\mathrm{univ}}^r)$ with the properties
$\rho_{\mathrm{univ}}\equiv\rho_0\mod \mathfrak
m_{R_{\mathrm{univ}}}$ and
$\rho_{\mathrm{univ}}|_{\mathrm{Gal}(\bar\eta_s/\eta_s)}\cong
\rho_0|_{\mathrm{Gal}(\bar\eta_s/\eta_s)}$ for all $s\in S$, such
that for any element $\rho$ in $R(A)$, there exists a (not
necessarily unique) local $F$-homomorphism $R_{\mathrm {univ}}\to A$
which brings $\rho_{\mathrm{univ}}$ to $\rho$.
\end{theorem}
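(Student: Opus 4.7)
The plan is to follow Mazur's deformation theory of Galois representations and verify Schlessinger's representability criteria $(H_1)$--$(H_4)$ for the functor $R$ on $\mathcal{C}$. Write $U = X - S$ and $G_s = \mathrm{Gal}(\bar\eta_s/\eta_s)$. Verifying $(H_1)$, $(H_2)$, $(H_3)$ gives a hull as in part (iv), and verifying $(H_4)$ under the scalar-endomorphism assumption (via Schur's lemma) promotes the hull to a universal deformation for part (i). Beyond Mazur's classical argument, the novelty lies in handling the local monodromy condition in the fiber-product criteria $(H_1)$, $(H_2)$: given deformations $\rho' \in R(A')$ and $\rho'' \in R(A'')$ matching over $A$, the local conjugating matrices $P'_s \in \mathrm{GL}((A')^r)$ and $P''_s \in \mathrm{GL}((A'')^r)$ witnessing $\rho'|_{G_s} \cong \rho_0|_{G_s}$ and $\rho''|_{G_s} \cong \rho_0|_{G_s}$ are rescaled so as to agree modulo the common image in $A$ (using that each $P_s$ is determined only up to self-conjugation of $\rho_0|_{G_s}$); they then glue to an element of $\mathrm{GL}((A' \times_A A'')^r)$ realizing the local condition for the glued deformation.

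For (ii), a deformation to $F[\epsilon]$ takes the form $\rho(g) = \rho_0(g)(I + \epsilon c(g))$, and the homomorphism condition makes $c: \pi_1(U, \bar\eta) \to \mathrm{End}(V)$ a 1-cocycle for the adjoint action; strict equivalence adds a coboundary, and the local condition $\rho|_{G_s} \cong \rho_0|_{G_s}$ translates to $c|_{G_s}$ being a coboundary. Hence
\[
R(F[\epsilon]) \;=\; \ker\!\Bigl(H^1(\pi_1(U, \bar\eta), \mathrm{End}(V)) \;\longrightarrow\; \bigoplus_{s \in S} H^1(G_s, \mathrm{End}(V))\Bigr).
\]
The Leray spectral sequence for $j: U \hookrightarrow X$, combined with the stalk identification $(R^1 j_* \mathscr{End}(\mathscr F))_s \cong H^1(G_s, \mathrm{End}(V))$, identifies this kernel canonically with $H^1(X, j_* \mathscr{End}(\mathscr F))$, giving (ii) and also the finite-dimensionality needed for $(H_3)$.

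For smoothness (iii), given a small extension $0 \to I \to \tilde A \to A \to 0$ with $\mathfrak m_{\tilde A} I = 0$ and $\rho_A \in R(A)$, I would first lift $\rho_A$ to a homomorphism $\tilde\rho: \pi_1(U) \to \mathrm{GL}(\tilde A^r)$ by taking a set-theoretic matrix lift; the failure to be a homomorphism is a 2-cocycle whose class lies in $H^2(U, \mathscr{End}(\mathscr F)) \otimes I$, which vanishes by the affine cohomological vanishing theorem since $U$ is smooth affine of dimension $1$ and $\ell \neq p$. Next I would adjust $\tilde\rho$ by a 1-cocycle to enforce $\tilde\rho|_{G_s} \cong \rho_0|_{G_s}$ for each $s$; the residual obstruction, by Leray, lies in $H^2(X, j_* \mathscr{End}(\mathscr F)) \otimes I$. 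The main obstacle is that this target is a priori nonzero; the crucial observation is that the obstruction is built from the Lie bracket on $\mathrm{End}(V)$ and hence is trace-zero, so it lies in $H^2(X, j_* \mathscr{End}^0(\mathscr F))$, where $\mathscr{End}^0$ denotes the trace-zero summand. Poincaré duality on $U$ gives $H^2_c(U, \mathscr{End}^0(\mathscr F)) \cong H^0(U, \mathscr{End}^0(\mathscr F))^\vee$, which vanishes by the scalar-endomorphism hypothesis, so the obstruction is zero and $R$ is formally smooth on $\mathcal{C}$. Therefore $R_{\mathrm{univ}} \cong F[[t_1, \ldots, t_m]]$ with $m = \dim_F H^1(X, j_* \mathscr{End}(\mathscr F))$.
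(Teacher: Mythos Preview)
Your treatment of (i), (ii), and (iv) via Schlessinger's criteria matches the paper's approach, including the key maneuver in $(H_1)$ of adjusting the local conjugating matrices by an element of $\mathrm{Aut}_{A[G_s]}(A^r)$ so that they agree over $A$ and hence glue; this is exactly Lemma~1.3. Your computation of $R(F[\epsilon])$ as the kernel of the restriction map on $H^1$ and its identification with $H^1(X,j_*\mathscr{End}(\mathscr F))$ via Leray is also the paper's argument (Lemmas~1.7--1.9).

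The gap is in (iii). Your claim that ``the obstruction is built from the Lie bracket on $\mathrm{End}(V)$ and hence is trace-zero'' is not justified and, as stated, is not correct. With the obstruction set up exactly as you describe, its representative at $s$ is the $1$-cocycle $\delta_s$ determined by $\tilde\rho(g)=(I+\delta_s(g))\,P_s'\rho_0(g)P_s'^{-1}$; taking determinants gives $\det\tilde\rho(g)=(1+\mathrm{Tr}\,\delta_s(g))\det\rho_0(g)$, so $\mathrm{Tr}\,\delta_s$ is precisely the local cocycle for the deformation $\det\rho$ of $\det\rho_0$. There is no bracket structure forcing this to vanish. The paper proceeds differently: it shows (Lemma~2.2(ii)) that the image of the obstruction under $\mathrm{Tr}\colon H^2(X,j_*\mathscr{End}(\mathscr F))\to H^2(X,F)$ equals the obstruction for $\det\rho$, and then proves \emph{separately} (Lemma~2.3) that rank-one deformations with undeformed local monodromy are unobstructed, by identifying that deformation functor with $\mathrm{Hom}\bigl(\pi_1(X)^{\mathrm{ab},(\ell)},\,1+\mathfrak m_A\bigr)$ and exhibiting its universal ring as $F[[t_1,\dots,t_{2g}]]$ via $\pi_1(X)^{\mathrm{ab},(\ell)}\cong\mathbb Z_\ell^{2g}$. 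The scalar-endomorphism hypothesis together with the duality pairing for $j_*$ then makes $\mathrm{Tr}$ an \emph{isomorphism} on $H^2$, whence the original obstruction vanishes. Your shortcut omits this rank-one input. Two smaller points: the relevant duality is the one for $j_*$ (pairing $H^2(X,j_*\mathscr G)$ with $H^0(X,j_*\mathscr G^\vee(1))$), not Poincar\'e duality on $U$ between $H^2_c$ and $H^0$; and when $S=\emptyset$ the curve $U=X$ is proper, so your affine-vanishing step for $H^2(U,\mathscr{End}(\mathscr F))$ fails---the paper reduces this case to $S=\{\infty\}$.
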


We will prove (i), (ii) and (iv) in \S 1, and prove (iii) in \S 2.

\section{Proof of the main theorem}

In this section, we prove study the pro-representablity of the
functor $R$ using Schlessinger's criterion (\cite{S}). We start with
a series of lemmas.

\begin{lemma} Let $G$ be a group, let $F$ be a field, and let $V_1$ and $V_2$ be
$F[G]$-modules which are finite dimensional over $F$. Then for any
commutative $F$-algebra $A$, we have a canonical isomorphism
$$\mathrm{Hom}_{F[G]}(V_1,V_2)\otimes_F A\cong
\mathrm{Hom}_{A[G]}(V_1\otimes_FA, V_2\otimes_FA).$$
\end{lemma}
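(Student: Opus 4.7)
The plan is to define the canonical comparison map, reduce the problem via the standard base-change isomorphism for $\mathrm{Hom}$ of finite-dimensional modules to a statement about $G$-invariants, and then exploit finite-dimensionality together with the flatness of $A$ over the field $F$ to conclude.

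First, I would define the canonical map by sending $\phi\otimes a$ (with $\phi\colon V_1\to V_2$ an $F[G]$-morphism and $a\in A$) to the $A$-linear extension $V_1\otimes_F A\to V_2\otimes_F A$, $v\otimes b\mapsto \phi(v)\otimes ab$. This is $G$-equivariant since $\phi$ is, and $F$-bilinearity produces an $A$-linear map
$$\Phi\colon\mathrm{Hom}_{F[G]}(V_1,V_2)\otimes_F A\longrightarrow\mathrm{Hom}_{A[G]}(V_1\otimes_F A,V_2\otimes_F A).$$
Writing $H:=\mathrm{Hom}_F(V_1,V_2)$ endowed with the $G$-action $(g\phi)(v):=g\phi(g^{-1}v)$, finite-dimensionality of $V_1$ supplies the classical base-change isomorphism $H\otimes_F A\xrightarrow{\sim}\mathrm{Hom}_A(V_1\otimes_F A,V_2\otimes_F A)$, which is easily checked to be $G$-equivariant. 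Since $H^G=\mathrm{Hom}_{F[G]}(V_1,V_2)$ and the $G$-invariants of the right-hand side recover $\mathrm{Hom}_{A[G]}(V_1\otimes_F A,V_2\otimes_F A)$, the lemma reduces to showing that the natural map
$$H^G\otimes_F A\;\longrightarrow\;(H\otimes_F A)^G$$
is an isomorphism.

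The core step is this last isomorphism. Since $H$ is finite-dimensional over $F$, the descending intersection $H^G=\bigcap_{g\in G}\ker(g-1)$ stabilizes after finitely many steps, so there exist $g_1,\dots,g_n\in G$ with $H^G=\bigcap_{i=1}^n\ker(g_i-1)$. For any $g\in G$, the $F$-linear map $g-1\colon H\to H$ vanishes on this intersection, so it admits a linear factorization $g-1=\beta_g\circ(g_1-1,\dots,g_n-1)$ for some $\beta_g\colon H^n\to H$. Tensoring with $A$ preserves this factorization, whence $(H\otimes_F A)^G=\bigcap_{i=1}^n\ker(g_i\otimes 1-1)$. On the other hand, since $F$ is a field (so $A$ is $F$-flat), tensoring the left-exact sequence $0\to H^G\to H\xrightarrow{(g_i-1)_i}H^n$ with $A$ yields $H^G\otimes_F A=\bigcap_{i=1}^n\ker(g_i\otimes 1-1)$, which matches. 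The main obstacle is dealing with the possibly infinite family of relations $\{g-1\}_{g\in G}$; this is precisely where the finite $F$-dimensionality of $V_1$ and $V_2$ is essential, as it allows the reduction to finitely many conditions to which $-\otimes_F A$ can be applied.
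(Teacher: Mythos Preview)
Your argument is correct. The paper takes a more elementary and direct route: it fixes $F$-bases of $V_1$ and $V_2$, writes an arbitrary $T\in\mathrm{Hom}_{A[G]}(V_1\otimes_F A,V_2\otimes_F A)$ as a matrix $T=\sum_i a_i E_i$ with the $a_i\in A$ linearly independent over $F$ and the $E_i$ matrices over $F$, and then reads off from the equivariance relation $\sum_i a_i(E_iM_g-N_gE_i)=0$ that each $E_i$ is itself $G$-equivariant; injectivity is handled by embedding both sides into $\mathrm{Hom}_F(V_1,V_2)\otimes_F A\cong\mathrm{Hom}_A(V_1\otimes_F A,V_2\otimes_F A)$. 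Your approach abstracts the question to the base-change statement $H^G\otimes_F A\cong(H\otimes_F A)^G$ and proves it via flatness of $A$ over $F$ together with a reduction to finitely many group elements. The two arguments exploit the same underlying fact---that $A$ is free (hence flat) over the field $F$---but package it differently: the paper's version is a one-line linear-independence trick, while yours is a cleaner homological statement that makes the role of flatness explicit and would generalize more readily (for instance to flat base change over rings, or to other functors presented as a finite limit).
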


\begin{proof} Fix bases for the $F$-vector spaces
$V_1$ and $V_2$. They give rise to bases for the $A$-modules
$V_1\otimes_FA$ and $V_2\otimes_FA$. Write homomorphisms between
these $A$-modules in terms of matrices using these bases. Suppose
$T:V_1\otimes_F A\to V_2\otimes_F A$ is an $A[G]$-module
homomorphism. Then we can write
$$T=\sum_i a_i E_i$$ such that $a_i\in A$ are linearly independent over
$F$ and $E_i$ are matrices with entries in $F$. For any $g\in G$,
let $M_g$ and $N_g$ be the matrices of the action of $g$ on $V_1$
and $V_2$, respectively. Note that entries of $M_g$ and $N_g$ lie in
$F$. We have
$$\sum_i a_i(E_iM_g-N_gE_i)=0.$$ By the linear independence of $a_i$
over $F$, we have $E_iM_g=N_gE_i$ for all $i$ and all $g\in G$. So
$E_i$ define $F[G]$-module homomorphisms from $V_1$ and $V_2$. This
shows that the canonical map $$\mathrm{Hom}_{F[G]}(V_1,V_2)\otimes_F
A\to \mathrm{Hom}_{A[G]}(V_1\otimes_FA, V_2\otimes_FA)$$ is
surjective. The injectivity of this map follows from the fact that
$\mathrm{Hom}_{F[G]}(V_1,V_2)\otimes_F A$ (resp.
$\mathrm{Hom}_{A[G]}(V_1\otimes_FA, V_2\otimes_FA)$) can be
considered as a subspace of  $\mathrm{Hom}_F(V_1,V_2)\otimes_F A$
(resp. $\mathrm{Hom}_A(V_1\otimes_FA, V_2\otimes_FA)$), and that
$$\mathrm{Hom}_F(V_1,V_2)\otimes_F
A\cong \mathrm{Hom}_A(V_1\otimes_FA, V_2\otimes_FA).$$ (The last
isomorphism follows from the fact $V_1\cong F^r$
$(r=\mathrm{dim}_FV)$.)
\end{proof}

\begin{lemma} Let $A$ be an Artinian local ring, $F=A/\mathfrak m_A$
the residue field, $\rho:G\to\mathrm{GL}(A^r)$ a homomorphism, and
$\rho_0:G\to \mathrm{GL}(F^r)$ the homomorphism defined by $\rho$
modulo $\mathfrak m_A$. Regard $A^r$ (resp. $F^r$) as a module over
$A[G]$ (resp. $F[G]$) through the representation $\rho$ (resp.
$\rho_0$.) Suppose all elements in $\mathrm{End}_{F[G]}(F^r)$ are
scalar multiplications. Then all elements in
$\mathrm{End}_{A[G]}(A^r)$ are scalar multiplications.
\end{lemma}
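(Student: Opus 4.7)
The plan is to induct on the smallest integer $n$ with $\mathfrak m_A^n=0$; such an $n$ exists because $A$ is Artinian. The base case $n=1$ is just the hypothesis, because then $A=F$ and $\rho=\rho_0$. For the inductive step I would apply the inductive hypothesis to $A'=A/\mathfrak m_A^{n-1}$, which is again an Artinian local $F$-algebra with residue field $F$ and satisfies $\mathfrak m_{A'}^{n-1}=0$; the representation $\rho$ reduces to a representation $\rho':G\to\mathrm{GL}((A')^r)$ whose reduction modulo $\mathfrak m_{A'}$ is still $\rho_0$, so the hypothesis on endomorphisms is preserved under the induction.

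Given an $A[G]$-endomorphism $T$ of $A^r$, its reduction is an $A'[G]$-endomorphism of $(A')^r$ and is therefore scalar multiplication by some $\bar a\in A'$ by the inductive hypothesis. Lifting $\bar a$ to $a\in A$, the difference $T-aI$ is an $A[G]$-endomorphism of $A^r$ whose reduction modulo $\mathfrak m_A^{n-1}$ vanishes, so its image lies in $I\cdot A^r$, where $I=\mathfrak m_A^{n-1}$. Because $\mathfrak m_A\cdot I=\mathfrak m_A^n=0$, the ideal $I$ and the submodule $I\cdot A^r$ are annihilated by $\mathfrak m_A$ and hence are naturally $F$-vector spaces, and for the same reason $T-aI$ kills $\mathfrak m_A A^r$ and factors through $A^r/\mathfrak m_A A^r=F^r$, producing an $F$-linear, $G$-equivariant map $\bar T\colon F^r\to I\cdot A^r$.

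It remains to identify $\mathrm{Hom}_{F[G]}(F^r,I\cdot A^r)$. Choosing a basis of $A^r$ identifies $I\cdot A^r$ with $I^r$, and because $\rho\equiv\rho_0\pmod{\mathfrak m_A}$ and $\mathfrak m_A$ annihilates $I\cdot A^r$, the $G$-action on $I\cdot A^r$ coincides with the action of $\rho_0$ on the second factor of $I\otimes_F F^r$. Choosing an $F$-basis of the finite-dimensional $F$-vector space $I$ splits $I\otimes_F F^r$ into a finite direct sum of copies of $F^r$ as $F[G]$-modules, and the hypothesis $\mathrm{End}_{F[G]}(F^r)=F$ then identifies $\mathrm{Hom}_{F[G]}(F^r,I\otimes_F F^r)$ with $I$. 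Concretely, $\bar T$ is multiplication by some $b\in I\subset A$, so $T-aI=bI$, and $T=(a+b)I$ is scalar multiplication, completing the induction.

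The main obstacle I anticipate is the bookkeeping in the last step: one must verify carefully that the $F[G]$-module structure on $I\cdot A^r$ inherited from the $A[G]$-module structure $\rho$ on $A^r$ genuinely matches the $\rho_0$-structure on a direct sum of copies of $F^r$, so that the scalar-endomorphism hypothesis on $\rho_0$ can actually be applied. Once this identification is nailed down, everything else is formal, and I would expect the argument to parallel the tensoring-with-$A$ trick already used in Lemma 1.1.
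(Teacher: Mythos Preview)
Your argument is correct and rests on the same idea as the paper's: pass to a quotient of $A$ along a surjection whose kernel is annihilated by $\mathfrak m_A$, so that on this kernel the $G$-action is governed by $\rho_0$ and the scalar-endomorphism hypothesis applies. The only real difference is the choice of filtration. The paper factors $A\to F$ into \emph{small} extensions (surjections with \emph{principal} kernel $tA$ satisfying $t\mathfrak m_A=0$); this lets one write an endomorphism as $P=aI+t\Delta$, deduce from $t\rho(g)\Delta=t\Delta\rho(g)$ that $\Delta\bmod\mathfrak m_A$ commutes with $\rho_0$, and conclude by a direct matrix computation. You instead descend the $\mathfrak m_A$-adic filtration, so your kernel $I=\mathfrak m_A^{n-1}$ need not be principal and you must identify $\mathrm{Hom}_{F[G]}(F^r,I\otimes_F F^r)\cong I$; this is slightly more module-theoretic but entirely equivalent in content. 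One harmless slip: the lemma does not assume $A$ is an $F$-algebra, only that $F=A/\mathfrak m_A$; your proof never actually uses an $F$-algebra structure on $A$, only the $F$-module structures on $I$ and on $A^r/\mathfrak m_A A^r$ coming from $\mathfrak m_A I=0$, so nothing is lost.
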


\begin{proof} Recall that an epimorphism $A\to B$ of Artinian local
rings is called {\it small} if its kernel $\mathfrak a$ is a
principal ideal with the property $\mathfrak a\mathfrak m_A=0$. Any
epimorphism of Artinian local rings can be written as a composite of
a series of small epimorphisms. To prove the lemma, it suffices to
prove the following statement: If $\phi:A\to B$ is a small
extension, and all elements in $\mathrm{End}_{B[G]}(B^r)$ and all
elements in $\mathrm{End}_{F[G]}(F^r)$ are scalar multiplications,
then all elements in $\mathrm{End}_{A[G]}(A^r)$ are scalar
multiplications. When $\phi$ is an isomorphism, this is obvious.
Suppose $\phi$ is not an isomorphism. Let $t$ be a generator of the
kernel of $\phi$. Since $t\mathfrak m_A=0$, multiplication by $t$
induces a homomorphism
$$A/\mathfrak m_A\to tA$$ which is necessarily injective. So if
$ta_1=ta_2$, then $a_1\equiv a_2\mod \mathfrak m_A$. Let $P\in
\mathrm{End}_{A[G]}(A^r)$. Then $\phi(P)$ lies in
$\mathrm{End}_{B[G]}(B^r)$. By our assumption, $\phi(P)$ is scalar.
So there exist $a\in A$ and a matrix $\Delta$ with entries in $A$
such that
$$P=aI+t\Delta.$$
For any $g\in G$, from the fact that $\rho(g)P=P\rho(g)$, we get
$t\rho(g)\Delta=t\Delta\rho(g)$. By our previous discussion, this
implies that $\rho(g)\Delta\equiv \Delta\rho(g)\mod \mathfrak m_A$.
Hence modulo $\mathfrak m_A$, $\Delta$ defines an element in
$\mathrm{End}_{F[G]}(F^r)$. By our assumption, there exist $a'\in A$
and a matrix $\Delta'$ with entries in $\mathfrak m_A$ such that
$$\Delta=a'I+\Delta'.$$ As $t\mathfrak m_A=0$, we have
$$P=aI+t\Delta=aI+t(a'I+\Delta')=(a+ta')I.$$ Hence $P$ is scalar.
\end{proof}

From now on let $F$ be a finite extension of the finite field
$\mathbb F_\ell$ or $\mathbb Q_\ell$, or an algebraic closure of
such a field, and let $\mathcal C$ be the category of Artinian local
$F$-algebras with residue field $F$. Let $X$ be a connected smooth
projective curve over an algebraically closed field $k$ of
characteristic $p$ distinct from $\ell$, $S$ a finite closed subset
of $X$, $\eta$ the generic point of $X$, and
$\rho_0:\pi_1(X-S,\bar\eta)\to\mathrm{GL}(F^r)$ an
$F$-representation of $\pi_1(X-S,\bar\eta)$. Define the functor
$R:\mathcal C\to(\mathrm {Sets})$ as in Theorem 0.1. We will apply
\cite[Theorem 2.11]{S} to this functor. Let $\phi':A'\to A$ and
$\phi'':A''\to A$ be morphisms in $\mathcal C$, and consider the map
\begin{eqnarray}
R(A'\times_A A'')\to R(A')\times_{R(A)}R(A'').
\end{eqnarray}

\begin{lemma} Suppose $\phi'':A''\to A$ is surjective. Then the map
(1) is surjective.
\end{lemma}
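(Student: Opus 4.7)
The plan is to take any pair $(\rho',\rho'')$ representing an element of $R(A')\times_{R(A)}R(A'')$, adjust $\rho''$ within its strict equivalence class so that $\phi'\circ\rho' = \phi''\circ\rho''$ holds on the nose, and then declare the lift to be $\rho(g) := (\rho'(g),\rho''(g)) \in \mathrm{GL}((A'\times_A A'')^r)$. The only nontrivial point will be verifying the local condition $\rho|_{G_s} \cong \rho_0|_{G_s}$ for each $s\in S$, where $G_s := \mathrm{Gal}(\bar\eta_s/\eta_s)$.

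The reduction matching is essentially automatic. By hypothesis there exists $Q\in\mathrm{GL}(A^r)$ with $Q\equiv I\bmod\mathfrak m_A$ conjugating $\phi'\rho'$ to $\phi''\rho''$. Surjectivity of $\phi''$ lets me lift $Q$ entrywise to a matrix $\tilde Q$ over $A''$; since $\tilde Q \equiv I\bmod\mathfrak m_{A''}$, it automatically lies in $\mathrm{GL}((A'')^r)$. Replacing $\rho''$ by $\tilde Q^{-1}\rho''\tilde Q$ does not change its class in $R(A'')$, and after this replacement $\phi'\rho' = \phi''\rho''$. Then $\rho(g) := (\rho'(g),\rho''(g))$ is a well-defined homomorphism into $\mathrm{GL}((A'\times_A A'')^r)$ satisfying $\rho \equiv \rho_0\bmod\mathfrak m_{A'\times_A A''}$, and the continuity/finite-extension condition required of a representation is inherited from $\rho'$ and $\rho''$ via the inclusion $A'\times_A A''\hookrightarrow A'\times A''$.

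The main obstacle is the local condition, because the equivalences witnessing $\rho'|_{G_s}\cong \rho_0|_{G_s}$ and $\rho''|_{G_s}\cong \rho_0|_{G_s}$ are not required to be strict and so need not agree after reduction to $A$. Fix $s$ and pick conjugators $P'\in\mathrm{GL}((A')^r)$, $P''\in\mathrm{GL}((A'')^r)$. Their reductions $\bar P',\bar P''\in\mathrm{GL}(A^r)$ both conjugate $\phi'\rho'|_{G_s}$ to $\rho_0|_{G_s}\otimes_F A$, so $M := \bar P'(\bar P'')^{-1}$ lies in $\mathrm{End}_{A[G_s]}(A^r)^\times$. Applying Lemma 1.1 with $V_1=V_2=F^r$ viewed as an $F[G_s]$-module via $\rho_0$ gives $\mathrm{End}_{A[G_s]}(A^r)\cong B\otimes_F A$ and $\mathrm{End}_{A''[G_s]}((A'')^r)\cong B\otimes_F A''$, where $B := \mathrm{End}_{F[G_s]}(F^r)$. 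The kernel of $B\otimes_F A''\twoheadrightarrow B\otimes_F A$ is $B\otimes_F \ker\phi''$, which is nilpotent because $\ker\phi''\subseteq\mathfrak m_{A''}$ is nilpotent; hence any lift $\tilde M\in B\otimes_F A''$ of $M$ is automatically a unit. Since $\tilde M$ commutes with $\rho_0|_{G_s}\otimes_F A''$, replacing $P''$ by $\tilde MP''$ preserves its conjugating property while arranging that its reduction to $A$ equals $\bar P'$; the pair $(P',\tilde MP'')$ then defines an element of $\mathrm{GL}((A'\times_A A'')^r)$ realizing the required equivalence.
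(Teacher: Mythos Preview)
Your proof is correct and follows essentially the same route as the paper's: after aligning $\phi'\rho'=\phi''\rho''$ via a lifted strict equivalence, the only issue is matching the local conjugators, and both arguments do this by invoking Lemma~1.1 to identify $\mathrm{End}_{A''[G_s]}((A'')^r)\cong B\otimes_F A''$ and then lifting the unit $M=\bar P'(\bar P'')^{-1}$ through the surjection $B\otimes_F A''\to B\otimes_F A$. The only cosmetic difference is that you justify invertibility of the lift $\tilde M$ via nilpotence of $B\otimes_F\ker\phi''$, whereas the paper checks invertibility modulo the maximal ideal; these are of course equivalent.
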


\begin{proof}
Let $\rho':\pi_1(X-S, \bar\eta)\to\mathrm{GL}(A'^r)$ and
$\rho'':\pi_1(X-S, \bar\eta)\to\mathrm{GL}(A''^r)$ be elements in
$R(A')$ and $R(A'')$, respectively, such that they have the same
image in $R(A)$. Then there exists $P\in\mathrm{GL}(A^r)$ such that
$P\equiv I\mod \mathfrak m_A$ and
$$\phi'\circ \rho'=P^{-1}(\phi''\circ \rho'')P.$$ Here for convenience,
we denote the homomorphism $\mathrm{GL}(A'^r)\to\mathrm{GL}(A^r)$
(resp. $\mathrm{GL}(A''^r)\to\mathrm{GL}(A^r)$) induced by $\phi'$
(resp. $\phi''$) also by $\phi'$ (resp. $\phi''$). Since $\phi''$ is
surjective. There exists $P''\in \mathrm{GL}(A''^r)$ such that
$\phi''(P'')=P$ and $P''\equiv I\mod \mathfrak m_{A''}$. Replacing
$\rho''$ by $P''^{-1}\rho'' P''$, we may assume
$$\phi'\circ \rho'=\phi''\circ \rho''.$$ We can then define a
representation
$$\rho:\pi_1(X-S,\bar\eta)\to\mathrm{GL}\Big((A'\times_AA'')^r\Big)$$
such that $\rho$ is mapped to $\rho'$ and $\rho''$ under the two
projections $A'\times_A A''\to A'$ and $A'\times_A A''\to A'$,
respectively. It is clear that $\rho\equiv \rho_0\mod\mathfrak
m_{A'\times_A A''}.$ To prove the lemma, it remains to verify that
$$\rho|_{\mathrm{Gal}(\bar\eta_s/\eta_s)}\cong
\rho_0|_{\mathrm{Gal}(\bar\eta_s/\eta_s)}$$ for any $s\in S$.

There exist $P_s'\in\mathrm{GL}(A'^r)$ and
$P_s''\in\mathrm{GL}(A''^r)$ such that
$$\rho'|_{\mathrm{Gal}(\bar\eta_s/\eta_s)}
=P_s'^{-1}(\rho_0|_{\mathrm{Gal}(\bar\eta_s/\eta_s)})P_s',\quad
\rho''|_{\mathrm{Gal}(\bar\eta_s/\eta_s)}
=P_s''^{-1}(\rho_0|_{\mathrm{Gal}(\bar\eta_s/\eta_s)})P_s''.$$ Since
$\phi'\circ \rho'=\phi''\circ \rho''$, we have
$$\phi'(P_s')^{-1}(\rho_0|_{\mathrm{Gal}(\bar\eta_s/\eta_s)})\phi'(P_s')
=\phi''(P_s'')^{-1}(\rho_0|_{\mathrm{Gal}(\bar\eta_s/\eta_s)})\phi''(P_s'').$$
So $\phi'(P_s')\phi''(P_s'')^{-1}\in \mathrm{GL}(A^r)$ defines an
automorphism of the representation
$$\mathrm{Gal}(\bar\eta_s/\eta_s)\stackrel{\rho_0}
\to\mathrm{GL}(F^r)\hookrightarrow \mathrm{GL}(A^r)$$ obtained from
$\rho_0$ by scalar extension from $F$ to $A$. By Lemma 1.1, we have
\begin{eqnarray*}
\mathrm{End}_{A[\mathrm{Gal}(\bar\eta_s/\eta_s)]}(A^r)&\cong&
\mathrm{End}_{F[\mathrm{Gal}(\bar\eta_s/\eta_s)]}(F^r)\otimes_{F}A,\\
\mathrm{End}_{A''[\mathrm{Gal}(\bar\eta_s/\eta_s)]}(A''^r)&\cong&
\mathrm{End}_{F[\mathrm{Gal}(\bar\eta_s/\eta_s)]}(F^r)\otimes_{F}A'',
\end{eqnarray*}
where $A^r$ (resp. $A''^r$) is considered as an
$A[\mathrm{Gal}(\bar\eta_s/\eta_s)]$-module (resp.
$A''[\mathrm{Gal}(\bar\eta_s/\eta_s)]$-module) through the
representation $\rho_0|_{\mathrm{Gal}(\bar\eta_s/\eta_s)}$. Since
$\phi'':A''\to A$ is surjective, the canonical homomorphism
$$\mathrm{End}_{A''[\mathrm{Gal}(\bar\eta_s/\eta_s)]}(A''^r)\to
\mathrm{End}_{A[\mathrm{Gal}(\bar\eta_s/\eta_s)]}(A^r)$$ is
surjective. Using the fact that an endomorphism $Q$ in
$\mathrm{End}_{A[\mathrm{Gal}(\bar\eta_s/\eta_s)]}(A^r)$ is an
isomorphism if and only if $Q$ induces an isomorphism of
$(A/\mathfrak m_A)^r$, we see that
$$\mathrm{Aut}_{A''[\mathrm{Gal}(\bar\eta_s/\eta_s)]}(A''^r)\to
\mathrm{Aut}_{A[\mathrm{Gal}(\bar\eta_s/\eta_s)]}(A^r)$$ is
surjective. We have shown that $\phi'(P_s')\phi''(P_s'')^{-1}$ lies
in $\mathrm{Aut}_{A[\mathrm{Gal}(\bar\eta_s/\eta_s)]}(A^r)$. So
there exists $Q_s''\in
\mathrm{Aut}_{A''[\mathrm{Gal}(\bar\eta_s/\eta_s)]}(A''^r)$ such
that
$$\phi''(Q_s'')=\phi'(P_s')\phi''(P_s'')^{-1}.$$  We then have
\begin{eqnarray*}
&&\phi'(P_s')=\phi''(Q_s''P_s''),\\
&&\rho'|_{\mathrm{Gal}(\bar\eta_s/\eta_s)}
=P_s'^{-1}(\rho_0|_{\mathrm{Gal}(\bar\eta_s/\eta_s)})P_s',\quad
\rho''|_{\mathrm{Gal}(\bar\eta_s/\eta_s)}
=(Q''_sP''_s)^{-1}(\rho_0|_{\mathrm{Gal}(\bar\eta_s/\eta_s)})Q_s''P_s''.
\end{eqnarray*}
We can find $P_s\in\mathrm{GL}\Big((A'\times_A A'')^r\Big)$ which is
mapped to $P_s'$ and $Q_s''P_s''$ under the two projections
$A'\times_A A''\to A'$ and $A'\times_A A''\to A''$, respectively. We
then have $$\rho|_{\mathrm{Gal}(\bar\eta_s/\eta_s)}
=P_s^{-1}(\rho_0|_{\mathrm{Gal}(\bar\eta_s/\eta_s)})P_s.$$ This
finishes the proof of the lemma.
\end{proof}

\begin{lemma} Suppose $\phi'':A''\to A$ is surjective. If one of the following
conditions holds, then the map (1) is bijective.

(a) All elements in $\mathrm{End}_{F[\pi_1(X-S,\bar\eta)]}(F^r)$ are
scalar multiplications.

(b) $A=F$ and $A''=F[\epsilon]$ is the ring of dual numbers over
$F$.
\end{lemma}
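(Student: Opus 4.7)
My plan is as follows. Since Lemma 1.3 already establishes surjectivity of (1), the task reduces to proving injectivity under each of the two hypotheses. Suppose $\rho_1,\rho_2\in R(A'\times_A A'')$ have the same image in $R(A')\times_{R(A)}R(A'')$; writing $\rho'_i,\rho''_i$ for the projections to $\mathrm{GL}(A'^r),\mathrm{GL}(A''^r)$, this means there exist $P'\in\mathrm{GL}(A'^r)$ and $P''\in\mathrm{GL}(A''^r)$, both $\equiv I$ modulo the respective maximal ideals, satisfying $\rho'_2=P'^{-1}\rho'_1 P'$ and $\rho''_2=P''^{-1}\rho''_1 P''$. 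I want to glue them into a single $P\in\mathrm{GL}((A'\times_A A'')^r)$ with $P\equiv I$ and $P^{-1}\rho_1 P=\rho_2$. Since $\mathrm{GL}((A'\times_A A'')^r)=\mathrm{GL}(A'^r)\times_{\mathrm{GL}(A^r)}\mathrm{GL}(A''^r)$, the pair $(P',P'')$ glues precisely when $\phi'(P')=\phi''(P'')$ in $\mathrm{GL}(A^r)$.

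The key object is the discrepancy $u:=\phi'(P')\phi''(P'')^{-1}\in\mathrm{GL}(A^r)$. Set $\sigma_1:=\phi'\circ\rho'_1=\phi''\circ\rho''_1$ and $\sigma_2$ analogously; both are representations $\pi_1(X-S,\bar\eta)\to\mathrm{GL}(A^r)$ reducing to $\rho_0$ modulo $\mathfrak m_A$. Projecting the two conjugation identities to $\mathrm{GL}(A^r)$ yields $\sigma_2=\phi'(P')^{-1}\sigma_1\phi'(P')=\phi''(P'')^{-1}\sigma_1\phi''(P'')$, so $u$ commutes with every $\sigma_1(g)$; equivalently, $u\in\mathrm{Aut}_{A[\pi_1(X-S,\bar\eta)]}(A^r)$, where $A^r$ carries the module structure coming from $\sigma_1$. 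Because $\phi'(P')\equiv I$ and $\phi''(P'')\equiv I$ in $\mathrm{GL}(A^r)$, we also have $u\equiv I\pmod{\mathfrak m_A}$. The idea is now to modify $P''$ by a central element to kill $u$.

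Under hypothesis (a), Lemma 1.2 applies to $\sigma_1$, so every element of $\mathrm{End}_{A[\pi_1(X-S,\bar\eta)]}(A^r)$ is a scalar; hence $u=cI$ for some $c\in 1+\mathfrak m_A$. Using surjectivity of $\phi'':A''\to A$, lift $c$ to some $c''\in 1+\mathfrak m_{A''}$ and replace $P''$ by $c''I\cdot P''$; since $c''I$ is central, this modified $P''$ still conjugates $\rho''_1$ to $\rho''_2$ and is still $\equiv I$, while now $\phi''(c''I\cdot P'')=cI\cdot\phi''(P'')=\phi'(P')$, so $(P',P'')$ glues to the required $P$. Under hypothesis (b) the argument is immediate: $A=F$ is the residue field of both $A'$ and $A''$, so $\phi'$ and $\phi''$ kill their maximal ideals; the congruences $P'\equiv I$ and $P''\equiv I$ then force $\phi'(P')=I=\phi''(P'')$ with no correction needed.

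The main obstacle I anticipate is pinning down the correct module structure in the discrepancy analysis: Lemma 1.2 must be invoked with the $A[\pi_1(X-S,\bar\eta)]$-module structure on $A^r$ coming from the deformation $\sigma_1$, not from the naive scalar extension of $\rho_0$, since it is only the former under which $u$ genuinely centralises the action. Once that framing is correctly set up, the passage from case (a) to a scalar $u$ is automatic from Lemma 1.2, and case (b) is entirely degenerate because discrepancies vanish at the level of the residue field.
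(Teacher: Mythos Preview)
Your proposal is correct and follows essentially the same route as the paper's own proof: reduce to injectivity via Lemma 1.3, form the discrepancy $u=\phi'(P')\phi''(P'')^{-1}$ and observe it centralises the induced representation on $A^r$, then in case (a) invoke Lemma 1.2 to force $u$ to be scalar, lift the scalar through the surjection $\phi''$ to adjust $P''$, and glue; in case (b) note that $A=F$ forces $\phi'(P')=\phi''(P'')=I$ outright. The only cosmetic difference is that the paper centralises against the representation coming from $\rho_2$ rather than $\rho_1$, which is immaterial since Lemma 1.2 applies to any representation reducing to $\rho_0$.
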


\begin{proof} By Lemma 1.3, it suffices to show (1) is injective.
Let $\rho_1,\;\rho_2:\pi_1(X-S,\bar\eta)\to
\mathrm{GL}_{A'\times_A A''}\Big((A'\times_A A'')^r\Big)$ be two
elements in $R(A'\times_A A'')$ such that they have same images in
both $R(A')$ and $R(A'')$. Let $\psi':A'\times_AA''\to A'$ and
$\psi'':A'\times_AA''\to A''$ be the projections. Then there exist
$P'\in \mathrm{GL}(A'^r)$ and $P''\in\mathrm{GL}(A''^r)$ such that
\begin{eqnarray*}
P'\equiv I \mod \mathfrak m_{A'},&&\quad P''\equiv I \mod \mathfrak
m_{A''}, \\
\psi'(\rho_1)=P'^{-1}\psi'(\rho_2)P',&&\quad
\psi''(\rho_1)=P''^{-1}\psi''(\rho_2)P''.
\end{eqnarray*}
We then have
$$\phi'\psi'(\rho_1)=\phi'(P')^{-1}\phi'\psi'(\rho_2)\phi'(P'),\quad
\phi''\psi''(\rho_1)=\phi''(P'')^{-1}\phi''\psi''(\rho_2)\phi''(P'').$$
We have
$$\phi'\psi'(\rho_1)=\phi''\psi''(\rho_1),\quad
\phi'\psi'(\rho_2)=\phi''\psi''(\rho_2).$$ Set
$\rho=\phi'\psi'(\rho_2)=\phi''\psi''(\rho_2)$. Then we have
$$\Big(\phi'(P')\phi''(P'')^{-1}\Big)^{-1}\rho\Big(\phi'(P')\phi''(P'')^{-1}\Big)=\rho.$$

First we work under the condition (a). By Lemma 1.2,
$\phi'(P')\phi''(P'')^{-1}$ must be a scalar matrix. Choose a scalar
matrix $a''I$ such that
$$\phi'(P')\phi''(P'')^{-1}=\phi''(a'')I,$$
where $a''$ is a unit in $A''$ and $a''\equiv 1\mod\mathfrak
m_{A''}$. We have $\phi'(P')=\phi''(a''P'').$ So we can find $Q\in
\mathrm{GL}\Big((A'\times_AA'')^r\Big)$ such that
$$\psi'(Q)=P',\quad \psi''(Q)=a''P'',\quad Q\equiv I\mod \mathfrak
m_{A'\times_AA''}.$$ As
$$\psi'(\rho_1)=\psi'(Q)^{-1}\psi'(\rho_2)\psi'(Q),\quad
\psi''(\rho_1)=\psi''(Q)^{-1}\psi''(\rho_2)\psi''(Q),$$ we have
$\rho_1=Q^{-1}\rho_2 Q.$ So $\rho_1$ and $\rho_2$ are strictly
equivalent and they give rise to the same element in
$R(A'\times_AA'')$. Hence the map (1) is injective.

Next we work under the condition (b). Since $$P'\equiv I \mod
\mathfrak m_{A'},\quad P''\equiv I \mod \mathfrak m_{A''},$$ we have
$\phi'(P')=\phi''(P'')=I$. The above argument still works by taking
$a''=1$.
\end{proof}

Let $\phi':A'\to A$ be a homomorphism in the category $\mathcal C$
with kernel $\mathfrak a$, and let $G$ be a group. Two homomorphisms
$\rho_i:G\to\mathrm{GL}(A'^r)$ $(i=1,2)$ are called {\it strictly
equivalent relative to} $\phi'$ if $\rho_1\equiv \rho_2\mod
\mathfrak a$ and there exists $P\in\mathrm{GL}(A'^r)$ such that
$P\equiv I\mod \mathfrak a$ and $P^{-1}\rho_1P=\rho_2$.

\begin{lemma} Let $\phi':A'\to A$ be a
homomorphism in $\mathcal C$ with kernel $\mathfrak a$.

(i) Let $\rho_1,\rho_2:\pi_1(X-S,\bar\eta)\to \mathrm{GL}(A'^r)$ be
two representation such that $\rho_1\equiv \rho_2\mod \mathfrak a$
and such that $\rho_i\equiv \rho_0\mod \mathfrak m_{A'}$ $(i=1,2)$.
Suppose all elements in $\mathrm{End}_{F[\pi_1(X-S,
\bar\eta)]}(F^r)$ are scalar multiplications. Then $\rho_1$ is
equivalent to $\rho_2$ if and only if $\rho_1$ is strictly
equivalent to $\rho_2$ relative to $\phi'$.

(ii) Suppose furthermore that $\phi':A'\to A$ is surjective. Let
$\rho_1:\pi_1(X-S,\bar\eta)\to \mathrm{GL}(A^r)$ be a representation
such that $\rho_1\equiv \rho_0\mod \mathfrak a$. Then for any
subgroup $G$ of $\pi_1(X-S,\bar\eta)$, we have that $\rho_1|_G$ is
equivalent to $\rho_0|_G$ if and only if $\rho_1|_G$ is strictly
equivalent to $\rho_0|_G$ relative to $\phi'$.
\end{lemma}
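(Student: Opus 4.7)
The easy direction---that strict equivalence relative to $\phi'$ implies equivalence---is immediate from the definitions, so in both parts the work is to upgrade a given conjugating matrix $P$ to one whose image under $\phi'$ is the identity. The common strategy is: apply $\phi'$ to the conjugation identity and conclude that $\phi'(P)$ lies in an appropriate centralizer over $A$, lift that centralizing object back to an element of $\mathrm{GL}(A'^r)$ which still commutes with the relevant representation, and divide it out of $P$. What differs between (i) and (ii) is which earlier lemma describes the centralizer, and correspondingly whether I need the scalar-endomorphism condition on $\rho_0$ or the surjectivity of $\phi'$.

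For (i), reducing the identity $P^{-1}\rho_2 P=\rho_1$ via $\phi'$ and using $\rho_1\equiv\rho_2\mod\mathfrak a$ shows that $\phi'(P)$ commutes with $\phi'(\rho_2(g))$ for every $g\in\pi_1(X-S,\bar\eta)$. Since $\phi'(\rho_2)\equiv\rho_0\mod\mathfrak m_A$ and $\mathrm{End}_{F[\pi_1(X-S,\bar\eta)]}(F^r)$ consists of scalars by hypothesis, Lemma 1.2 applied to $\phi'(\rho_2)$ forces $\phi'(P)=aI$ for some unit $a\in A^{\times}$. I then lift $a$ by choosing $a'$ to be the $(1,1)$-entry of $P$: since $\phi'(a')=a$ is a unit and $\phi'$ is local, $a'\in A'^{\times}$. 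Replacing $P$ by $(a')^{-1}P$ preserves the identity $P^{-1}\rho_2 P=\rho_1$ because the scalar $(a')^{-1}I$ commutes with $\rho_2$, and $\phi'((a')^{-1}P)=I$, so $(a')^{-1}P\equiv I\mod\mathfrak a$ as required. Note that no surjectivity of $\phi'$ is used in this step.

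For (ii) the scalar hypothesis is dropped, so $\phi'(P)$ can no longer be reduced to a scalar; instead I exploit the surjectivity of $\phi'$. Reducing $P^{-1}\rho_0|_G P=\rho_1|_G$ via $\phi'$ and using $\rho_1\equiv\rho_0\mod\mathfrak a$ places $\phi'(P)$ in $\mathrm{Aut}_{A[G]}(A^r)$, where $A^r$ carries the scalar extension of $\rho_0|_G$. Lemma 1.1 applied to $G$ with $V_1=V_2=F^r$ yields canonical isomorphisms $\mathrm{End}_{A'[G]}(A'^r)\cong\mathrm{End}_{F[G]}(F^r)\otimes_F A'$ and $\mathrm{End}_{A[G]}(A^r)\cong\mathrm{End}_{F[G]}(F^r)\otimes_F A$, and tensoring with a finite-dimensional $F$-space preserves the surjection $\phi':A'\to A$; so $\phi'(P)$ lifts to some $Q\in\mathrm{End}_{A'[G]}(A'^r)$. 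Since the reduction of $Q$ modulo $\mathfrak m_{A'}$ equals that of $\phi'(P)$ modulo $\mathfrak m_A$, which is invertible, $Q$ itself is invertible over the Artinian local ring $A'$, so $Q\in\mathrm{Aut}_{A'[G]}(A'^r)$. Setting $P':=Q^{-1}P$ then gives $\phi'(P')=I$, and the identity $(P')^{-1}\rho_0|_G P'=\rho_1|_G$ is preserved because $Q^{-1}$ commutes with $\rho_0(g)$ for every $g\in G$. The only genuinely delicate point is this trade-off---scalars without surjectivity in (i) versus surjectivity without scalars in (ii)---but once it is recognized, each construction is straightforward.
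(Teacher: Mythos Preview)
Your proof is correct and follows the same route as the paper's: reduce the conjugating matrix $P$ modulo $\mathfrak a$, identify the reduction as an automorphism of the relevant $A$-representation, lift this automorphism back to $A'$ (via Lemma~1.2 in part~(i), via Lemma~1.1 plus surjectivity in part~(ii)), and divide it out of $P$. Your choice $a'=P_{11}$ in~(i) makes explicit what the paper leaves implicit when it says ``let $P'$ be a scalar matrix lifting $P_0$'' (the lift exists precisely because $a$ already lies in the image of $\phi'$), and your remark about the scalar/surjectivity trade-off between the two parts is a clear articulation of exactly what the paper's argument is doing.
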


\begin{proof} ${}$

(i) If $\rho_1$ is equivalent to $\rho_2$, then we can find
$P\in\mathrm{GL}(A'^r)$ such that $P^{-1}\rho_1P=\rho_2$. Modulo
$\mathfrak a$, this equation implies that $P_0^{-1}\rho P_0=\rho$,
where $P_0\in\mathrm{GL}(A^r)$ is the image of $P$ under the
homomorphism $\mathrm{GL}(A'^r)\to \mathrm{GL}(A^r)$, and
$\rho:\pi_1(X-S,\bar\eta)\to \mathrm{GL}(A^r)$ is the representation
such that $\rho=\rho_1\equiv \rho_2\mod \mathfrak a$. If all
elements in $\mathrm{End}_{F[\pi_1(X-S, \bar\eta)]}(F^r)$ are scalar
multiplications, then by Lemma 1.2, $P_0$ must be a scalar matrix.
Let $P'\in \mathrm{GL}(A'^r)$ be a scalar matrix lifting $P_0$. Then
we have $PP'^{-1}\equiv I\mod \mathfrak a$, and
$(PP'^{-1})^{-1}\rho_1(PP'^{-1})=\rho_2$. So $\rho_1$ is strictly
equivalent to $\rho_2$ relative to $\phi'$.

(ii) If $\rho_1|_G$ is equivalent to $\rho_0|_G$, then we can find
$P\in\mathrm{GL}(A'^r)$ such that $P^{-1}\rho_1(g)P=\rho_0(g)$ for
all $g\in G$. Modulo $\mathfrak a$, this equation implies that
$P_0^{-1}\rho_0(g) P_0=\rho_0(g)$ for all $g\in G$, where
$P_0\in\mathrm{GL}(A^r)$ is the image of $P$ under the homomorphism
$\mathrm{GL}(A'^r)\to \mathrm{GL}(A^r)$. So we have
$P_0\in\mathrm{Aut}_{A[G]}(A^r)$, where $A^r$ is regarded as a
$G$-module through the representation $\rho_0$. As in the proof of
Lemma 1.3, it follows from Lemma 1.1 that the canonical map
$$\mathrm{Aut}_{A[G]}(A'^r)\to \mathrm{Aut}_{A[G]}(A^r)$$ is
surjective if $A'\to A$ is surjective. Here $A'^r$ is regarded as a
$G$-module through the representation $\rho_0$. So we can find
$P'\in \mathrm{GL}(A'^r)$ such that $P'\equiv P_0\mod \mathfrak a$
and $P'^{-1}\rho_0(g)P'=\rho_0(g)$ for all $g\in G$. We have
$PP'^{-1}\equiv I\mod \mathfrak a$, and
$(PP'^{-1})^{-1}\rho_1(g)(PP'^{-1})=\rho_0(g)$ for all $g\in G$. So
$\rho_1|_G$ is strictly equivalent to $\rho_0|_G$ relative to
$\phi'$.
\end{proof}

Given an $F$-representation
$\pi_1(X-S,\bar\eta)\to\mathrm{GL}(F^r)$, we can talk about the
cohomology groups $H^i(\pi_1(X-S,\bar\eta),F^r)$. Indeed, for any
topological $\pi_1(X-S,\bar\eta)$-modules $M$, we can define a chain
complex $C^\cdot(\pi_1(X-S,\bar\eta),M)$ as in \cite[I 2.2]{Se} by
requiring $C^i(\pi_1(X-S,\bar\eta),M)$ to be the group of continuous
maps $\pi_1(X-S,\bar\eta)^i\to M$, and we define
$$H^i(\pi_1(X-S,\bar\eta),M)\cong H^i(C^\cdot(\pi_1(X-S,\bar\eta),M)).$$
This allows us to define $H^i(\pi_1(X-S,\bar\eta),F^r)$ in the case
where $F$ is a finite extension of $\mathbb F_\ell$ or $\mathbb
Q_\ell$. In the case where $F$ is a finite field, $F^r$ is a finite
discrete $\pi_1(X-S,\bar\eta)$-module. The cohomology groups of
discrete $\pi_1(X-S,\bar\eta)$-modules are studied in detail in
\cite{Se}. In the case where $F$ is a finite extension of $\mathbb
Q_\ell$, let $\Lambda$ be the integral closure of $\mathbb Z_\ell$
in $F$, let $\lambda$ be a uniformizer of $\Lambda$, and let $L$ be
a lattice in $F^r$ which is stable under the action of
$\pi_1(X-S,\bar\eta)$. Then we have
\begin{eqnarray*}
C^\cdot(\pi_1(X-S,\bar\eta),F^r)&\cong&
C^\cdot(\pi_1(X-S,\bar\eta),L)\otimes_\Lambda F,\\
H^i(\pi_1(X-S,\bar\eta),F^r)&\cong&
H^i(\pi_1(X-S,\bar\eta),L)\otimes_\Lambda F.
\end{eqnarray*}
Moreover, we have
$$C^\cdot(\pi_1(X-S,\bar\eta),L)\cong \varprojlim_n
C^\cdot(\pi_1(X-S,\bar\eta),L/\lambda^nL).$$ Using
\cite[$0_{\mathrm{III}}$ 13.2.3]{EGA}, one can show
$$H^i(\pi_1(X-S,\bar\eta),L)\cong \varprojlim_n
H^i(\pi_1(X-S,\bar\eta),L/\lambda^nL).$$ Note that $L/\lambda^nL$
are finite discrete $\pi_1(X-S,\bar\eta)$-modules. Finally in the
case where $F$ is an algebraic closure of $\mathbb F_\ell$ (resp.
$\mathbb Q_\ell$), choose a finite extension $E$ of $\mathbb F_\ell$
(resp. $\mathbb Q_\ell$) contained in $F$ such that the
representation $\pi_1(X-S,\bar\eta)\to\mathrm{GL}(F^r)$ is obtained
from a representation $\pi_1(X-S,\bar\eta)\to\mathrm{GL}(E^r)$ by
scalar extension. We define
$$H^i(\pi_1(X-S,\bar\eta),F^r)=H^i(\pi_1(X-S,\bar\eta),E^r)\otimes_EF.$$
Note that for any finite extension $E'$ of $E$ contained in $F$, we
have
\begin{eqnarray*}
C^i(\pi_1(X-S,\bar\eta),E'^r)&\cong&
C^i(\pi_1(X-S,\bar\eta),E^r)\otimes_EE',\\
H^i(\pi_1(X-S,\bar\eta),E'^r)&\cong&
H^i(\pi_1(X-S,\bar\eta),E^r)\otimes_EE',
\end{eqnarray*}
and $H^i(\pi_1(X-S,\bar\eta),F^r)$ is isomorphic to the $i$-th
cohomology group of the chain complex
$C'^\cdot(\pi_1(X-S,\bar\eta),F^r)$ defined by
$$C'^\cdot(\pi_1(X-S,\bar\eta),F^r)=\varinjlim_{E'}
C^\cdot(\pi_1(X-S,\bar\eta),E'^r),$$ where $E'$ goes over the set of
finite extensions of $E$ contained in $F$. Note that for this chain
complex, $C'^i(\pi_1(X-S,\bar\eta),F^r)$ is contained in the group
of all continuous maps $\pi_1(X-S,\bar\eta)^i\to F^r.$

If $M$ is a finite discrete $\pi_1(X-S,\bar\eta)$-module, then $M$
defines a locally constant etale sheaf $\mathscr M$ on $X-S$. If $M$
is a torsion discrete $\pi_1(X-S,\bar\eta)$-module, then $M$ is a
direct limit of finite discrete $\pi_1(X-S,\bar\eta)$-modules, and
hence $M$ also defines an etale sheaf $\mathscr M$ on $X-S$. By
\cite[XI 5]{SGA1}, we have
$$H^1(\pi_1(X-S,\bar\eta),M)\cong H^1(X-S,\mathscr M)$$ for any
finite discrete $\pi_1(X-S,\bar\eta)$-module $M$, and hence for any
torsion discrete $\pi_1(X-S,\bar\eta)$-module $M$ by \cite[VII
5.7]{SGA4} and \cite[Proposition I 8]{Se}. It follows from the above
the discussion that we have the following:

\begin{lemma} Given an $F$-representation
$\pi_1(X-S,\bar\eta)\to\mathrm{GL}(F^r)$, let $\mathscr F$ be the
corresponding $F$-sheaf on $X-S$. We have
$$H^1(\pi_1(X-S,\bar\eta),F^r)\cong H^1(X-S, \mathscr F).$$
\end{lemma}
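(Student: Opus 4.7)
The plan is to piece together the case analysis already set up in the paragraph preceding the lemma, so that the proof is essentially a bookkeeping exercise verifying that the isomorphism produced in the finite, discrete case survives the inverse limits, tensor products and direct limits used to define $H^1$ in each of the remaining cases.

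First I would treat the case where $F$ is a finite extension of $\mathbb F_\ell$. Then $F^r$ is a finite discrete $\pi_1(X-S,\bar\eta)$-module and $\mathscr F$ is the corresponding locally constant étale sheaf on $X-S$. The identification $H^1(\pi_1(X-S,\bar\eta),F^r)\cong H^1(X-S,\mathscr F)$ is exactly the statement of \cite[XI 5]{SGA1} cited above, so there is nothing new to prove in this case.

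Next I would handle the case where $F$ is a finite extension of $\mathbb Q_\ell$. Let $\Lambda$, $\lambda$ and $L\subset F^r$ be as in the paragraph preceding the lemma. For each $n$, the module $L/\lambda^nL$ is a finite discrete $\pi_1(X-S,\bar\eta)$-module, and it corresponds to a locally constant étale sheaf $\mathscr L_n$ on $X-S$; the sheaf $\mathscr F$ is by definition the $\ell$-adic sheaf $(\mathscr L_n)_n$, with $H^1(X-S,\mathscr F)=\varprojlim_n H^1(X-S,\mathscr L_n)\otimes_\Lambda F$. Applying the finite case to each $L/\lambda^nL$ gives compatible isomorphisms $H^1(\pi_1(X-S,\bar\eta),L/\lambda^nL)\cong H^1(X-S,\mathscr L_n)$. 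Using the formula
\[
H^1(\pi_1(X-S,\bar\eta),L)\cong \varprojlim_n H^1(\pi_1(X-S,\bar\eta),L/\lambda^nL)
\]
stated above (from \cite[$0_{\mathrm{III}}$ 13.2.3]{EGA}) and then tensoring with $F$ over $\Lambda$, I obtain the claimed isomorphism.

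Finally, when $F$ is an algebraic closure of $\mathbb F_\ell$ or $\mathbb Q_\ell$, choose a finite extension $E$ of the prime field in $F$ such that the representation factors through $\mathrm{GL}(E^r)$. Apply the case already proved to get $H^1(\pi_1(X-S,\bar\eta),E^r)\cong H^1(X-S,\mathscr F_E)$, where $\mathscr F_E$ is the $E$-sheaf corresponding to $E^r$. Both sides behave compatibly under scalar extension: the left-hand side does so by the definition $H^1(\pi_1(X-S,\bar\eta),F^r)=H^1(\pi_1(X-S,\bar\eta),E^r)\otimes_E F$ recalled above, and the right-hand side does so because $\mathscr F=\mathscr F_E\otimes_E F$ and $\otimes_E F$ is exact. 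Tensoring with $F$ over $E$ concludes the proof.

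The main obstacle is purely bookkeeping: one must check that the sheaf-theoretic and group-theoretic $H^1$ truly are built by the same limit/colimit procedures from the finite discrete case, so that the elementary isomorphism of \cite[XI 5]{SGA1} can be propagated through $\varprojlim_n$ (for $\ell$-adic coefficients) and $\varinjlim_{E'}$ (for algebraic-closure coefficients) without any loss. All the necessary compatibilities have already been recorded in the paragraph preceding the statement, so the verification is routine.
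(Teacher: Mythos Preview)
Your proposal is correct and follows essentially the same approach as the paper: the paper's proof of this lemma is simply the sentence ``It follows from the above discussion,'' where that discussion is precisely the case-by-case description of $H^1(\pi_1(X-S,\bar\eta),F^r)$ that you have carefully unpacked, together with the base case for finite discrete modules from \cite[XI 5]{SGA1}. You have written out in detail what the paper leaves implicit.
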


\begin{lemma} Let $\phi':A'\to A$ be a nonzero homomorphism in the category $\mathcal C$
such that its kernel $\mathfrak a$ has the property $\mathfrak
m_{A'}\mathfrak a=0$. Then $\mathfrak a$ can be regarded as a vector
space over $F$ and $\mathfrak a^2=0$. Let
$\rho:\pi_1(X-S,\bar\eta)\to\mathrm{GL}(A^r)$ be a representation
such that $\rho\equiv \rho_0\mod \mathfrak m_A$ and such that $\rho$
can be lifted to a representation
$\pi_1(X-S,\bar\eta)\to\mathrm{GL}(A'^r)$. Then the set of strictly
equivalent classes relative to $\phi'$ of such representations
lifting $\rho$ can be identified with the set
$H^1(\pi_1(X-S,\bar\eta),\mathrm{Ad}(\rho_0))\otimes_F \mathfrak a$,
where $\mathrm{Ad}(\rho_0)$ is the $F$-vector space of $r\times r$
matrices with entries in $F$ on which $\pi_1(X-S,\bar \eta)$ acts by
the composition of $\rho_0$ with the adjoint representation of
$\mathrm{GL}(F^r)$.
\end{lemma}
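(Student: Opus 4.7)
The plan is to follow Mazur's standard cocycle calculation, adapted to the relative setup. Fix one lift $\tilde\rho:\pi_1(X-S,\bar\eta)\to\mathrm{GL}(A'^r)$ of $\rho$ (which exists by hypothesis). For any other lift $\tilde\rho'$ of $\rho$, the matrix $\tilde\rho'(g)\tilde\rho(g)^{-1}$ reduces to $I$ modulo $\mathfrak a$, so it can be written as $I+c(g)$ with $c(g)$ an $r\times r$ matrix with entries in $\mathfrak a$. The conditions $\mathfrak m_{A'}\mathfrak a=0$ and $\mathfrak a^2=0$ (which follows from the former since $\mathfrak a\subset\mathfrak m_{A'}$) make all quadratic terms in entries of $\mathfrak a$ vanish, and they force multiplication by $\tilde\rho(g)$ on $M_r(\mathfrak a)$ to depend only on $\rho_0(g)$. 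The expansion of $\tilde\rho'(gh)=\tilde\rho'(g)\tilde\rho'(h)$ then gives exactly
$$c(gh)=c(g)+\mathrm{Ad}(\rho_0(g))\bigl(c(h)\bigr),$$
so $c$ is a $1$-cocycle of $\pi_1(X-S,\bar\eta)$ with values in the $\pi_1$-module $M_r(F)\otimes_F\mathfrak a=\mathrm{Ad}(\rho_0)\otimes_F\mathfrak a$. Conversely, given a cocycle $c$, the formula $\tilde\rho'(g)=(I+c(g))\tilde\rho(g)$ defines a new lift of $\rho$, and this is a bijection between lifts and $Z^1\bigl(\pi_1(X-S,\bar\eta),\mathrm{Ad}(\rho_0)\otimes_F\mathfrak a\bigr)$.

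Next I would show that strict equivalence relative to $\phi'$ corresponds precisely to coboundaries. If $P=I+p\in\mathrm{GL}(A'^r)$ with $p\in M_r(\mathfrak a)$ conjugates $\tilde\rho$ to $\tilde\rho'$, then $P^{-1}=I-p$ and the relation $P^{-1}\tilde\rho(g)P=\tilde\rho'(g)$ unfolds (again using $\mathfrak a^2=0$ and $\mathfrak m_{A'}\mathfrak a=0$) to
$$c(g)=\mathrm{Ad}(\rho_0(g))(p)-p,$$
which is exactly the coboundary condition. Conversely any coboundary arises this way. Hence the set of lifts of $\rho$ up to strict equivalence relative to $\phi'$ is canonically in bijection with
$H^1\bigl(\pi_1(X-S,\bar\eta),\mathrm{Ad}(\rho_0)\otimes_F\mathfrak a\bigr)$.

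Finally, since $\mathfrak a$ is a finite-dimensional $F$-vector space (being a finitely generated module over the Artinian local $F$-algebra $A'$ that is killed by $\mathfrak m_{A'}$), we have $\mathrm{Ad}(\rho_0)\otimes_F\mathfrak a\cong\mathrm{Ad}(\rho_0)^{\dim_F\mathfrak a}$ as continuous $\pi_1$-modules, so $H^1$ commutes with the tensor product:
$$H^1\bigl(\pi_1(X-S,\bar\eta),\mathrm{Ad}(\rho_0)\otimes_F\mathfrak a\bigr)\cong H^1\bigl(\pi_1(X-S,\bar\eta),\mathrm{Ad}(\rho_0)\bigr)\otimes_F\mathfrak a.$$
Throughout one must verify that the cocycles and coboundaries produced are continuous in the sense appropriate to the definition of $H^i$ recalled above (discrete, $\ell$-adic, or obtained by scalar extension from a finite-dimensional subfield), but this is automatic: $c$ and $p$ take values in a finite-dimensional $F$-subspace of $M_r(A')$, and continuity of $\tilde\rho,\tilde\rho'$ transfers to continuity of $c$.

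The main obstacle is not conceptual but bookkeeping: one must consistently choose the normalization $\tilde\rho'=(I+c)\tilde\rho$ versus $\tilde\rho'=\tilde\rho(I+c)$, and repeatedly exploit $\mathfrak m_{A'}\mathfrak a=0$ to reduce $\mathrm{Ad}(\tilde\rho(g))$ to $\mathrm{Ad}(\rho_0(g))$ on matrices with entries in $\mathfrak a$, so that all cocycle/coboundary formulas use the $\pi_1$-module structure defined by $\rho_0$ alone. Once this reduction is carefully verified, the identification with $H^1$ is the standard first-order deformation calculation.
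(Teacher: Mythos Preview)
Your argument is correct and follows essentially the same route as the paper: fix one lift, write any other lift as $(I+\delta)\cdot(\text{fixed lift})$, check that $\delta$ is a $1$-cocycle for $\mathrm{Ad}(\rho_0)\otimes_F\mathfrak a$, and identify strict equivalence relative to $\phi'$ with the coboundary relation. The only difference is that you spell out the continuity check and the passage $H^1(\,\cdot\,,\mathrm{Ad}(\rho_0)\otimes_F\mathfrak a)\cong H^1(\,\cdot\,,\mathrm{Ad}(\rho_0))\otimes_F\mathfrak a$, both of which the paper leaves implicit.
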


\begin{proof} Fix a representation $\rho_1:\pi_1(X-S,\bar\eta)\to\mathrm{GL}(A'^r)$
lifting $\rho$. Any representation $\rho_2$ lifting $\rho$ can be
written in the form
$$\rho_2(g)=\rho_1(g)+\delta(g)\rho_1(g),$$ where $\delta(g)$ $(g\in
\pi_1(X-S,\bar\eta))$ are $r\times r$ matrices with entries in
$\mathfrak a$, and they define a continuous map
$\delta:\pi_1(X-S,\bar\eta)\to\mathrm{Ad}(\rho_0)\otimes_F\mathfrak
a$. Using the fact that $\rho_2(g_1g_2)=\rho_2(g_1)\rho_2(g_2)$, one
can verify $\delta$ is a 1-cocycle. Conversely, for any 1-cocycle
$\delta:\pi_1(X-S,\bar\eta)\to\mathrm{Ad}(\rho_0)\otimes_F\mathfrak
a$, the map $\rho_1+\delta\rho_1:\pi_1(X-S,\bar\eta)\to\mathrm
{GL}(A^r)$ is a representation lifting $\rho$. Suppose
$\delta,\delta':\pi_1(X-S,\bar\eta)\to\mathrm{Ad}(\rho_0)\otimes_F\mathfrak
a$ are two 1-cocycles such that $\delta-\delta'$ differs by a
1-coboundary, that is,
$$\delta'(g)-\delta(g)=\rho_1(g)M\rho_1(g)^{-1}-M$$ for some
$r\times r$ matrix $M$ with entries in $\mathfrak a$. Let
$\rho_2=\rho_1+\delta\rho_1$ and $\rho_2'=\rho_1+\delta'\rho_1$.
Then we have
$$\rho_2'=(I-M)\rho_2(I+M)=(I+M)^{-1}\rho_2'(I+M).$$
So $\rho_2$ and $\rho_2'$ are strictly equivalent relative to
$\phi'$. Conversely, if $\rho_2$ and $\rho_2'$ are two
representations lifting $\rho$ which are strictly equivalent
relative to $\phi'$, then the 1-cycles $\delta$ and $\delta'$
defined by $\rho_2=\rho_1+\delta\rho_1$ and
$\rho_2'=\rho_1+\delta'\rho_1$ differ by a 1-coboundary. This proves
our assertion.
\end{proof}

\begin{lemma} Let $j:X-S\hookrightarrow X$ be the canonical open immersion, and
let $\mathscr G$ be a lisse $F$-sheaf on $X-S$. Then we have a
canonical exact sequence
$$0\to H^1(X,j_\ast \mathscr G)\to H^1(X-S, \mathscr G)\to \bigoplus_{s\in
S}H^1(\eta_s, \mathscr G|_{\eta_s})\to H^2(X,j_\ast\mathscr G).$$
\end{lemma}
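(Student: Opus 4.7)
The plan is to derive the exact sequence from the five-term sequence associated to the Leray spectral sequence of the open immersion $j:X-S\hookrightarrow X$. Specifically, for the lisse $F$-sheaf $\mathscr G$ one has
$$E_2^{p,q}=H^p(X,R^qj_\ast\mathscr G)\Longrightarrow H^{p+q}(X-S,\mathscr G),$$
and the associated five-term exact sequence reads
$$0\to H^1(X,j_\ast\mathscr G)\to H^1(X-S,\mathscr G)\to H^0(X,R^1j_\ast\mathscr G)\to H^2(X,j_\ast\mathscr G)\to H^2(X-S,\mathscr G).$$
So everything will reduce to identifying the middle term $H^0(X,R^1j_\ast\mathscr G)$ with $\bigoplus_{s\in S}H^1(\eta_s,\mathscr G|_{\eta_s})$.

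For this identification I would compute stalks: for a geometric point over $s\in S$, the stalk $(R^qj_\ast\mathscr G)_s$ equals the $q$-th cohomology of the restriction of $\mathscr G$ to the punctured strict henselization at $s$, which by definition is $H^q(\eta_s,\mathscr G|_{\eta_s})$. Since $\mathscr G$ is lisse on $X-S$, the sheaves $R^qj_\ast\mathscr G$ for $q\geq 1$ are supported on the finite set $S$ of closed points; hence they are skyscraper sheaves and $H^0(X,R^1j_\ast\mathscr G)=\bigoplus_{s\in S}H^1(\eta_s,\mathscr G|_{\eta_s})$, while $H^p(X,R^qj_\ast\mathscr G)=0$ for all $p\geq 1$ and $q\geq 1$. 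This both produces the desired middle term and confirms the convergence shape that makes the five-term sequence as written.

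The main obstacle, such as it is, will be not the abstract spectral-sequence argument (which is standard for torsion coefficients) but carrying it through the various coefficient cases allowed for $F$. For $F$ a finite field one works directly with etale cohomology of constructible sheaves. For $F$ a finite extension of $\mathbb Q_\ell$, I would choose a $\pi_1$-stable lattice $L\subset\mathscr G_{\bar\eta}$, obtain the five-term sequences at each finite level $L/\lambda^n L$, pass to the inverse limit (the Mittag-Leffler condition and the finite-dimensionality over the residue field at each level let one invoke the exactness of $\varprojlim$ as in the preceding discussion using $0_{\mathrm{III}}$ 13.2.3 of EGA), and then tensor with $F$ over $\Lambda$. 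For $F$ an algebraic closure of $\mathbb F_\ell$ or $\mathbb Q_\ell$, one descends $\mathscr G$ to a finite extension $E$ of the corresponding prime field, obtains the sequence there, and tensors up to $F$ over $E$, using the compatibility of the cohomology groups $H^i(\pi_1(X-S,\bar\eta),-)$ and $H^i(X-S,-)$ with these operations that was recorded earlier in the section.

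In summary: write down the Leray spectral sequence; read off its five-term exact sequence; compute $(R^qj_\ast\mathscr G)_s$ using strict henselizations to identify the local term; note the vanishing of $H^{\geq 1}(X,R^{\geq 1}j_\ast\mathscr G)$ because $R^{\geq 1}j_\ast\mathscr G$ is supported on the zero-dimensional set $S$; and finally pass through the limit and extension-of-scalars manipulations indicated above to cover every allowed coefficient field $F$.
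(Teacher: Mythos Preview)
Your argument is correct. The paper takes a closely related but slightly different route: instead of invoking the Leray spectral sequence and its five-term exact sequence, it forms the mapping cone $\Delta$ of $j_\ast\mathscr G\to Rj_\ast\mathscr G$, observes that $\mathscr H^i(\Delta)=0$ for $i\neq 1$ and $\mathscr H^1(\Delta)\cong R^1j_\ast\mathscr G$, and then reads off the desired exact sequence from the long exact cohomology sequence of the distinguished triangle $j_\ast\mathscr G\to Rj_\ast\mathscr G\to\Delta\to$. The two arguments are essentially equivalent repackagings of the same content: both hinge on the identification of $R^1j_\ast\mathscr G$ as a skyscraper sheaf on $S$ with stalks $H^1(\eta_s,\mathscr G|_{\eta_s})$, and your five-term sequence is exactly what one extracts from the paper's long exact sequence once one notes $H^0(X,\Delta)=0$ and $H^1(X,\Delta)\cong H^0(X,R^1j_\ast\mathscr G)$. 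Your additional care with the various coefficient fields $F$ is more explicit than the paper's treatment, which simply works in the $\ell$-adic derived-category formalism; either way is fine.
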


\begin{proof} Let $\Delta$ be the mapping cone of the canonical
morphism $j_\ast \mathscr G\to Rj_\ast \mathscr G$. We have a
distinguished triangle
$$j_\ast \mathscr G\to Rj_\ast \mathscr G
\to\Delta\to.$$ It gives rise to a long exact sequence
$$j_\ast\mathscr G\stackrel\cong\to
j_\ast\mathscr G\to\mathscr H^0(\Delta)\to 0\to R^1j_\ast \mathscr
G\to \mathscr H^1(\Delta)\to 0.$$ It follows that $\mathscr
H^i(\Delta)=0$ for $i\not=1$ and $\mathscr H^1(\Delta)\cong
R^1j_\ast \mathscr G$. Note that $R^1j_\ast\mathscr G$ is a
punctured sheaf supported on $S$, and for any $s\in S$, we have
$$(R^1j_\ast \mathscr G)_{\bar s}\cong H^1(\eta_s,
\mathscr G|_{\eta_s}).$$ It follows that
$$H^0(X,\Delta)=0,\quad H^1(X,\Delta)\cong\bigoplus_{s\in S}H^1(\eta_s,
\mathscr G|_{\eta_s}).$$ Taking the long exact sequence of
cohomology groups associated to the above distinguished triangle, we
get a long exact sequence
$$\begin{array}{cccccc}
H^0(X,\Delta)&\to H^1(X,j_\ast\mathscr G)\to & H^1(X,Rj_\ast
\mathscr G)&\to & H^1(X,\Delta)&\to
H^2(X,j_\ast\mathscr G).\\
\wr\!\parallel&&\wr\!\parallel&&\wr\!\parallel&\\
0&&H^1(X-S,\mathscr G)&& \bigoplus_{s\in S}H^1(\eta_s,\mathscr
G|_{\eta_s})&
\end{array}$$ Our assertion follows.
\end{proof}

\begin{lemma} We have a canonical isomorphism $R(F[\epsilon])\cong
H^1(X, j_\ast \mathscr End(\mathscr F))$, where
$j:X-S\hookrightarrow X$ is the open immersion, and $\mathscr F$ is
the lisse $F$-sheaf on $X$ corresponding to the representation
$\rho_0$.
\end{lemma}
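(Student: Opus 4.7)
The idea is to combine the Hochschild-style cohomological classification of lifts from Lemma 1.7 with the cohomological local-global sequence from Lemma 1.8. First, I apply Lemma 1.7 to the small extension $\phi':F[\epsilon]\to F$, whose kernel $\mathfrak{a}=\epsilon F$ is one-dimensional over $F$. Since the strict equivalence used to define $R(F[\epsilon])$ is exactly strict equivalence relative to $\phi'$ (because $\mathfrak{m}_{F[\epsilon]}=\ker\phi'$), Lemma 1.7 identifies the set of strict equivalence classes of representations $\rho:\pi_1(X-S,\bar\eta)\to\mathrm{GL}(F[\epsilon]^r)$ lifting $\rho_0$ with $H^1(\pi_1(X-S,\bar\eta),\mathrm{Ad}(\rho_0))\otimes_F\mathfrak{a}\cong H^1(\pi_1(X-S,\bar\eta),\mathrm{Ad}(\rho_0))$. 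Because $\mathrm{Ad}(\rho_0)$ is the representation that corresponds to the lisse sheaf $\mathscr End(\mathscr F)$, Lemma 1.6 rewrites this as $H^1(X-S,\mathscr End(\mathscr F))$.

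Next I impose the local conditions. Applying Lemma 1.7 with $\pi_1(X-S,\bar\eta)$ replaced by the decomposition subgroup $\mathrm{Gal}(\bar\eta_s/\eta_s)$ for each $s\in S$ (the proof transports verbatim), I obtain a bijection between strict equivalence classes of lifts of $\rho_0|_{\mathrm{Gal}(\bar\eta_s/\eta_s)}$ to $F[\epsilon]$ and $H^1(\eta_s,\mathscr End(\mathscr F)|_{\eta_s})$. By Lemma 1.5(ii) (applied to $\phi':F[\epsilon]\to F$ and $G=\mathrm{Gal}(\bar\eta_s/\eta_s)$), the condition $\rho|_{\mathrm{Gal}(\bar\eta_s/\eta_s)}\cong\rho_0|_{\mathrm{Gal}(\bar\eta_s/\eta_s)}$ appearing in the definition of $R$ is equivalent to strict equivalence relative to $\phi'$, hence to the vanishing of the associated class in $H^1(\eta_s,\mathscr End(\mathscr F)|_{\eta_s})$.

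The remaining point is naturality: under the bijections provided by Lemma 1.7, the class attached to $\rho|_{\mathrm{Gal}(\bar\eta_s/\eta_s)}$ is the image of the global class attached to $\rho$ under the restriction map $H^1(X-S,\mathscr End(\mathscr F))\to H^1(\eta_s,\mathscr End(\mathscr F)|_{\eta_s})$. This is transparent on cocycles, since the cocycle $\delta$ defined via $\rho(g)=\rho_1(g)+\delta(g)\rho_1(g)$ (as in the proof of Lemma 1.7) restricts to the analogous cocycle for $\rho|_{\mathrm{Gal}(\bar\eta_s/\eta_s)}$. Consequently, $R(F[\epsilon])$ is the kernel of
$$H^1(X-S,\mathscr End(\mathscr F))\longrightarrow\bigoplus_{s\in S}H^1(\eta_s,\mathscr End(\mathscr F)|_{\eta_s}),$$
and by Lemma 1.8 this kernel is canonically isomorphic to $H^1(X,j_\ast\mathscr End(\mathscr F))$.

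The main potential pitfall is the naturality compatibility in the last paragraph: one must verify that the classification from Lemma 1.7 is functorial for restriction of representations to closed subgroups, and that under the identification of Lemma 1.6 this restriction matches the sheaf-theoretic restriction map of Lemma 1.8. Both assertions are routine to check on the level of continuous cocycles, but they are the step where care is required; everything else is an application of the lemmas already established.
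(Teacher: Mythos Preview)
Your proposal is correct and follows essentially the same route as the paper's own proof: both apply Lemma~1.7 to the small extension $F[\epsilon]\to F$ (globally and locally), use Lemma~1.5(ii) to translate the local equivalence condition into strict equivalence and hence into a coboundary condition, invoke Lemma~1.6 to pass from group cohomology to sheaf cohomology, and then identify the resulting kernel with $H^1(X,j_\ast\mathscr End(\mathscr F))$ via Lemma~1.8. The only difference is that you make the naturality of the restriction maps explicit, which the paper leaves implicit.
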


\begin{proof}
By Lemma 1.7,  $H^1(\pi_1(X-S,\bar\eta),\mathrm{Ad}(\rho_0))$ can be
identified with the set of strict equivalent classes of
representations $\rho:\pi_1(X-S,\bar\eta)\to
\mathrm{GL}((F[\epsilon])^r)$ with the property
$\rho\equiv\rho_0\mod \epsilon$. Similarly,
$H^1(\mathrm{Gal}(\bar\eta_s/\eta_s),\mathrm{Ad}(\rho_0))$ can be
identified with the set of strict equivalent classes of
representations $\rho:\mathrm{Gal}(\bar\eta_s/\eta_s)\to
\mathrm{GL}((F[\epsilon])^r)$ with the property
$\rho\equiv\rho_0\mod \epsilon$.

Let's describe the kernel of the canonical homomorphism
$$H^1(\pi_1(X-S,\bar\eta),\mathrm{Ad}(\rho_0))\to
H^1(\mathrm{Gal}(\bar\eta_s/\eta_s),\mathrm{Ad}(\rho_0)).$$ Let
$\rho:\pi_1(X-S,\bar\eta)\to\mathrm{GL}((F[\epsilon])^r)$ be a
representation with the property $\rho\equiv\rho_0\mod \epsilon$.
The 1-cocyle $M:\pi_1(X-S,\bar\eta)\to
\mathrm{End}((F[\epsilon])^r)$ defined by $\rho=\rho_0+M\rho_0$
becomes a 1-coboundary with respect to the group
$\mathrm{Gal}(\bar\eta_s/\eta_s)$ if any only if
$\rho|_{\mathrm{Gal}(\bar\eta_s/\eta_s)}$ is strictly equivalent to
$\rho_0|_{\mathrm{Gal}(\bar\eta_s/\eta_s)}$. By Lemma 1.5 (ii), this
is equivalent to saying that
$\rho|_{\mathrm{Gal}(\bar\eta_s/\eta_s)}$ is equivalent to
$\rho_0|_{\mathrm{Gal}(\bar\eta_s/\eta_s)}$. Therefore the kernel of
the canonical homomorphism
$$H^1(\pi_1(X-S,\bar\eta),\mathrm{Ad}(\rho_0))\to
H^1(\mathrm{Gal}(\bar\eta_s/\eta_s),\mathrm{Ad}(\rho_0))$$ can be
identified with the set of strict equivalent classes of
representations
$\rho:\pi_1(X-S,\bar\eta)\to\mathrm{GL}((F[\epsilon])^r)$ such that
$\rho\equiv\rho_0\mod \epsilon$ and
$\rho|_{\mathrm{Gal}(\bar\eta_s/\eta_s)}$ is equivalent to
$\rho_0|_{\mathrm{Gal}(\bar\eta_s/\eta_s)}$.

By Lemma 1.8, $H^1(X,j_\ast\mathscr End(\mathscr F))$ can be
identified with the kernel of the canonical homomorphism
$$H^1(X-S,\mathscr End(\mathscr F))\to
\bigoplus_{s\in S}H^1(\eta_s,\mathscr End(\mathscr F)|_{\eta_s}).$$
By Lemma 1.6, we have canonical isomorphisms
$$H^1(X-S,\mathscr End(\mathscr F))\cong
H^1(\pi_1(X-S,\bar\eta),\mathrm{Ad}(\rho_0)),\quad H^1(\eta_s,
\mathscr End(\mathscr F)|_{\eta_s})\cong
H^1(\mathrm{Gal}(\bar\eta_s/\eta_s),\mathrm{Ad}(\rho_0)).$$ Combined
with the above discussion, we find that $H^1(X,j_\ast \mathscr
End(\mathscr F))$ can be canonically identified with the set of
strict equivalent classes of representations
$\rho:\pi_1(X-S,\bar\eta)\to\mathrm{GL}((F[\epsilon])^r)$ such that
$\rho\equiv\rho_0\mod \epsilon$ and
$\rho|_{\mathrm{Gal}(\bar\eta_s/\eta_s)}\cong
\rho_0|_{\mathrm{Gal}(\bar\eta_s/\eta_s)}$ for all $s\in S$. That
is, we have $H^1(X,j_\ast \mathscr End(\mathscr F))\cong
R(F[\epsilon])$.
\end{proof}

\begin{proof}[Proof of Theorem 0.1 (i), (ii), (iv)]
By Lemma 1.3, the condition $(H_1)$ in \cite[Theorem 2.11]{S} holds.
By Lemma 1.4 (b), the condition $(H_2)$ holds. By Lemma 1.9, the
tangent space of the functor $R$ can be identified with
$H^1(X,j_\ast\mathscr End(\mathscr F))$, which is finite dimensional
over $F$ by \cite[1.1]{Dfinitude}. So the condition $(H_3)$ holds.
Thus the functor $R$ has a pro-representable hull. Suppose
furthermore that $\mathrm{End}_{F[\pi_1(X-S,\bar\eta)]}(F^r)$
consists of scalar multiplications. Then the condition $(H_4)$ holds
by Lemma 1.4 (a). Thus the functor $R$ is pro-representable.
\end{proof}

\section{Obstruction to deformation}

Keep the notation of \S 1.

\begin{lemma} Suppose $S$ is nonempty.
For any torsion discrete $\pi_1(X-S,\bar\eta)$-module $M$, we have
$H^i(\pi_1(X-S,\bar\eta),M)=0$ for all $i\geq 2$.
\end{lemma}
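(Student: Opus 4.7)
The plan is to show that the profinite group $\pi_1(X - S, \bar\eta)$ has cohomological dimension at most $1$ on torsion discrete modules, by identifying its continuous cohomology with the etale cohomology of $X - S$ and invoking Artin's vanishing theorem for affine schemes. First I would reduce to the case of finite $M$: as a torsion discrete $\pi_1$-module, $M$ is the filtered colimit of its finite sub-$\pi_1$-modules, and continuous cohomology of a profinite group commutes with such colimits of discrete modules (Serre, \emph{Cohomologie Galoisienne}, I.2.2, Proposition~8), so it suffices to prove the vanishing when $M$ is finite.

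For finite $M$, let $\mathscr M$ be the associated locally constant constructible etale sheaf on $X - S$. Since $S$ is nonempty in the smooth projective curve $X$, the complement $X - S$ is a smooth affine curve of dimension $1$ over the algebraically closed field $k$. By Artin's vanishing theorem (SGA~4, XIV, Corollaire~3.2), etale cohomology of a torsion sheaf on an affine scheme of dimension $d$ vanishes in degrees $> d$; applied here this gives $H^i(X - S, \mathscr M) = 0$ for all $i \geq 2$.

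To transfer this vanishing to $H^i(\pi_1(X - S, \bar\eta), M)$, I would upgrade the degree-$1$ comparison of Lemma~1.6 to the all-degree statement $H^i(\pi_1(X - S, \bar\eta), M) \cong H^i(X - S, \mathscr M)$, which is the classical etale $K(\pi, 1)$-property of smooth affine curves over an algebraically closed field (Artin; see SGA~4, XI). Combined with the previous paragraph, this yields the lemma.

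The main obstacle is precisely this upgraded comparison in degrees $\geq 2$: Lemma~1.6 and the SGA~1, XI reference used there only give the comparison through degree $1$, so the substantive input one cannot avoid is either the $K(\pi,1)$-property cited above or, equivalently, the cohomological-dimension statement for $\pi_1(X - S, \bar\eta)$ itself. An acceptable shortcut in the situation actually needed in \S 2 is to reduce further to $M$ whose order is invertible in $k$ and invoke Grothendieck--Murre's description of the prime-to-$p$ geometric fundamental group of an affine curve as a free profinite group on finitely many generators, which by the standard criterion has cohomological dimension $\leq 1$; one then concludes by a short Hochschild--Serre argument that the full $\pi_1$ also has cohomological dimension $\leq 1$ on such coefficients.
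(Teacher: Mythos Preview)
Your main route is correct but uses a heavier input than the paper does. You cite the \'etale $K(\pi,1)$ property of a smooth affine curve as a black box to get the comparison $H^i(\pi_1(X-S,\bar\eta),M)\cong H^i(X-S,\mathscr M)$ in all degrees, and then apply Artin vanishing. The paper instead stays entirely within the degree-$1$ comparison already available (Lemma~1.6) and reaches the higher degrees by d\'evissage: it embeds $M$ into a torsion induced module $I$, so that $H^i(\pi_1,I)=0$ for $i\geq 1$ and $H^i(\pi_1,M)\cong H^{i-1}(\pi_1,I/M)$ for $i\geq 2$; by induction one is reduced to showing $H^1(\pi_1,I)\to H^1(\pi_1,I/M)$ is onto, and \emph{that} is exactly the degree-$1$ comparison plus Artin vanishing for $H^2(X-S,\mathscr M)$. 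In other words, the paper's argument is essentially a self-contained proof of the $K(\pi,1)$ statement you quote, using only tools already set up in \S1. Your approach is shorter if one is willing to import the $K(\pi,1)$ theorem; the paper's buys independence from that reference.

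One caution about your alternative shortcut. Knowing that the maximal prime-to-$p$ quotient $\pi_1^{(p')}$ is free (hence of cohomological dimension $\leq 1$) does not by itself give $\mathrm{cd}\,\pi_1\leq 1$ on prime-to-$p$ coefficients via Hochschild--Serre: the kernel of $\pi_1\to\pi_1^{(p')}$ is not in general a pro-$p$ group, so you cannot immediately conclude that its higher cohomology with prime-to-$p$ coefficients vanishes. Making this precise requires an additional argument (or simply reverting to your main route), so I would not present it as a clean shortcut.
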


\begin{proof} Let $I$ be a torsion discrete induced $\pi_1(X-S,\bar\eta)$-module
such that we have an embedding $M\hookrightarrow I$. We have
$H^i(\pi_1(X-S,\bar\eta),I)=0$ for all $i\geq 1$. (Confer \cite[I
2.5]{Se}.) So
$$H^i(\pi_1(X-S,\bar\eta),M)\cong
H^{i-1}(\pi_1(X-S,\bar\eta),I/M).$$ By induction on $i$, to prove
the lemma, it suffices to show $H^2(\pi_1(X-S,\bar\eta),M)=0$ for
any torsion discrete $\pi_1(X-S,\bar\eta)$-module $M$. As
$H^2(\pi_1(X-S,\bar\eta),I)=0$, it suffices to prove that the map
$$H^1(\pi_1(X-S,\bar\eta),I)\to H^1(\pi_1(X-S,\bar\eta),I/M)$$ is
surjective. Since $I$ and $M$ are torsion discrete
$\pi_1(X-S,\bar\eta)$-modules, they define $F$-sheaves $\mathscr I$
and $\mathscr M$ on $X-S$, respectively. By Lemma 1.6, we have
\begin{eqnarray*}
H^1(\pi_1(X-S,\bar\eta),I)&\cong& H^1(X-S,\mathscr I),\\
H^1(\pi_1(X-S,\bar\eta),I/M)&\cong& H^1(X-S,\mathscr I/\mathscr M).
\end{eqnarray*}
So it suffices to show that the map $$H^1(X-S,\mathscr I)\to
H^1(X-S,\mathscr I/\mathscr M)$$ the surjective. We have an exact
sequence
$$H^1(X-S,\mathscr I)\to
H^1(X-S,\mathscr I/\mathscr M)\to H^2(X-S,\mathscr M).$$ Since $S$
is nonempty, $X-S$ is an affine curve. So we have $H^2(X-S,\mathscr
M)=0$ by \cite[XIV 3.2]{SGA4}. Our assertion follows.
\end{proof}

Suppose $S$ is nonempty. Let $A'\to A$ be an epimorphism in the
category $\mathcal C$ such that its kernel $\mathfrak a$ has the
property $\mathfrak m_{A'}\mathfrak a=0$. Let
$\rho:\pi_1(X-S,\bar\eta)\to \mathrm{GL}(A^r)$ be a representation
such that $\rho\equiv \rho_0\mod \mathfrak m_A$. Fix a set theoretic
continuous lifting $\gamma:\pi_1(X-S,\bar\eta)\to \mathrm{GL}(A'^r)$
of $\rho$. Consider the map
\begin{eqnarray*}
c:\pi_1(X-S,\bar\eta)\times \pi_1(X-S,\bar\eta)&\to& \mathfrak
a\otimes_F \mathrm{End}(F^r)\cong \mathfrak a\otimes_F
\mathrm{Ad}(\rho_0),\\
c(g_1,g_2)&=&\gamma(g_1g_2)\gamma(g_2)^{-1}\gamma(g_1)^{-1}-1.
\end{eqnarray*}
One can show $c$ is a 2-cocycle. By Lemma 2.1, $c$ must be a
$2$-coboundary. Choose a continuous map
$$\delta:\pi_1(X-S,\bar\eta)\to \mathfrak a\otimes_k
\mathrm{Ad}(\rho_0)$$ such that $c=d(\delta\gamma^{-1})$. Then
$\rho'=\gamma+\delta:\pi_1(X-S,\bar\eta)\to \mathrm{GL}(A'^r)$ is a
representation lifting $\rho$. We have thus proved that $\rho$ can
always be lifted to a representation $\rho':\pi_1(X-S,\bar\eta)\to
\mathrm{GL}(A'^r)$.

Suppose furthermore that
$\rho|_{\mathrm{Gal}(\bar\eta_s/\eta_s)}\cong
\rho_0|_{\mathrm{Gal}(\bar\eta_s/\eta_s)}$ for any $s\in S$. Choose
$P_s\in \mathrm{GL}(A^r)$ such that $P_s^{-1}\rho(g)P_s=\rho_0(g)$
for all $g\in \mathrm{Gal}(\bar\eta_s/\eta_s)$. Choose $P_s'\in
\mathrm{GL}(A'^r)$ lifting $P_s$. Then
$(P_s'\rho_0P_s'^{-1})|_{\mathrm{Gal}(\bar\eta_s/\eta_s)}$ is a
lifting of $\rho|_{\mathrm{Gal}(\bar\eta_s/\eta_s)}$. Now
$\rho'|_{\mathrm{Gal}(\bar\eta_s/\eta_s)}$ is also a lifting of
$\rho|_{\mathrm{Gal}(\bar\eta_s/\eta_s)}$. As in the proof of Lemma
1.7, the continuous map $\delta_s:
\mathrm{Gal}(\bar\eta_s/\eta_s)\to
\mathrm{Ad}(\rho_0)\otimes_F\mathfrak a$ defined by
$$\rho'(g)=P_s'\rho_0(g)P_s'^{-1}+\delta_s(g) P_s'\rho_0(g)P_s'^{-1}\quad (g\in
\mathrm{Gal}(\bar\eta_s/\eta_s))$$ is a 1-cocycle. Let $[\delta_s]$
be the cohomology class of $\delta_s$ in
$H^1(\mathrm{Gal}(\bar\eta_s/\eta_s),\mathrm{Ad}(\rho_0)\otimes_F\mathfrak
a)$ and let $c$ be the image of $([\delta_s])_{s\in S}$ in the
cokernel of the canonical homomorphism
$$H^1(\pi_1(X-S,\bar\eta), \mathrm{Ad}(\rho_0)\otimes_F\mathfrak a)
\to \bigoplus_{s\in S}H^1(\mathrm{Gal}(\bar\eta_s/\eta_s),
\mathrm{Ad}(\rho_0)\otimes_F\mathfrak a).$$ By Lemma 1.8 and Lemma
1.6, this cokernel can be considered as a subspace of
$H^2(X,j_\ast\mathscr End(\mathscr F))\otimes_F\mathfrak a$. So we
can also regard $c$ as an element of in $H^2(X,j_\ast\mathscr
End(\mathscr F))\otimes_F\mathfrak a$. We call $c$ the {\it
obstruction class to lifting $\rho$ while preserving local data}.
For simplicity, in the sequel we simply call $c$ the obstruction
class to lifting $\rho$. In Lemma 2.2 below, we will show that $c$
is independent of the choice of $\rho'$, $P_s$ and $P_s'$. Note that
we have
\begin{eqnarray*}
\mathrm{det}(\rho'(g))&=&
\mathrm{det}\Big((I+\delta_s(g))P_s'\rho_0(g)P_s'^{-1}\Big)\\
&=& \big(1+\mathrm{Tr}(\delta_s(g))\big)\mathrm{det}(\rho_0(g))\\
&=&
\mathrm{det}(\rho_0(g))+\mathrm{Tr}(\delta_s(g))\mathrm{det}(\rho_0(g)).
\end{eqnarray*}
It follows that the obstruction class to lifting
$\mathrm{det}(\rho)$ is the image of the obstruction class to
lifting $\rho$ under the homomorphism
$$H^2(X,j_\ast\mathscr End(\mathscr F))\otimes_F\mathfrak a\to
H^2(X,F)\otimes_F\mathfrak a$$ induced by
$$\mathrm{Tr}:\mathscr End(\mathscr F)\to F.$$

\begin{lemma} Suppose $S$ is nonempty.
Let $\phi:A'\to A$ be an epimorphism in the category $\mathcal C$
such that its kernel $\mathfrak a$ has the property $\mathfrak
m_{A'}\mathfrak a=0$. Let $\rho:\pi_1(X-S,\bar\eta)\to
\mathrm{GL}(A^r)$ be a representation such that $\rho\equiv
\rho_0\mod \mathfrak m_A$ and
$\rho|_{\mathrm{Gal}(\bar\eta_s/\eta_s)}\cong
\rho_0|_{\mathrm{Gal}(\bar\eta_s/\eta_s)}$ for any $s\in S$. Let
$\rho':\pi_1(X-S,\bar\eta)\to \mathrm{GL}(A'^r)$ be a representation
lifting $\rho$ (which always exists) and define the obstruction
class $c$ to lifting $\rho$ as above.

(i) $c$ is independent of the choice of $\rho'$, $P_s$ and $P_s'$,
and $c$ vanishes if and only if $\rho$ can be lifted to a
representation $\rho'':\pi_1(X-S,\bar\eta)\to \mathrm{GL}(A'^r)$
such that $\rho''\equiv \rho_0\mod \mathfrak m_A$ and
$\rho''|_{\mathrm{Gal}(\bar\eta_s/\eta_s)}\cong
\rho_0|_{\mathrm{Gal}(\bar\eta_s/\eta_s)}$ for any $s\in S$.

(ii) The obstruction class to lifting $\mathrm{det}(\rho)$ is the
image of $c$ under the homomorphism
$$H^2(X,j_\ast\mathscr End(\mathscr F))\otimes_F\mathfrak a\to
H^2(X,F)\otimes_F\mathfrak a$$ induced by $\mathrm{Tr}:\mathscr
End(\mathscr F)\to F.$
\end{lemma}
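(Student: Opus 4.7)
The plan for (i) is first to prove well-definedness of $c$ by checking invariance under each of the three choices $\rho'$, $P_s$, and $P_s'$, and then to deduce the ``if and only if'' statement via Lemma 1.7 combined with exactness at the cokernel position in Lemma 1.8.

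For independence of $P_s'$: I will replace $P_s'$ by $\tilde{P}_s' = P_s'(I + M_s)$ with $M_s$ an $\mathfrak{a}$-valued matrix, substitute into the defining equation for $\delta_s$, and expand modulo $\mathfrak{a}^2 = 0$. Using that conjugation of an $\mathfrak{a}$-valued matrix by any lift of $\rho(g)$ agrees with conjugation by $\rho_0(g)$ (since the difference lies in $\mathfrak{m}_{A'}\cdot\mathfrak{a} = 0$), the change $\tilde{\delta}_s - \delta_s$ comes out as $g\mapsto\mathrm{Ad}(\rho_0(g))(N_s)-N_s$ with $N_s = P_s'M_sP_s'^{-1}$, which is a $1$-coboundary in $\mathrm{Ad}(\rho_0)\otimes_F\mathfrak{a}$. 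For independence of $P_s$: two such matrices differ on the right by an element of $\mathrm{Aut}_{A[\mathrm{Gal}(\bar\eta_s/\eta_s)]}(A^r)$, which lifts to an element of $\mathrm{Aut}_{A'[\mathrm{Gal}(\bar\eta_s/\eta_s)]}(A'^r)$ by the surjectivity argument used in the proof of Lemma 1.3 applied to $\phi:A'\to A$; multiplying $P_s'$ by this lift gives a new admissible $P_s'$, and since the lift commutes with $\rho_0$, the quantity $P_s'\rho_0(g)P_s'^{-1}$ and hence $\delta_s$ is unchanged. For independence of $\rho'$: any other lift of $\rho$ has the form $\tilde{\rho}' = \rho'+\xi\rho'$ for a global $1$-cocycle $\xi:\pi_1(X-S,\bar\eta)\to\mathrm{Ad}(\rho_0)\otimes_F\mathfrak{a}$ by Lemma 1.7, and a one-line calculation gives $\tilde{\delta}_s = \delta_s+\xi|_{\mathrm{Gal}(\bar\eta_s/\eta_s)}$. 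Hence $([\tilde{\delta}_s])-([\delta_s])$ is the image of $[\xi]$ under the global-to-local map and vanishes in the cokernel.

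For the ``if and only if'': if $c = 0$, exactness produces a global $[\xi]$ with $[\xi|_{\mathrm{Gal}(\bar\eta_s/\eta_s)}] = [\delta_s]$ for every $s$. Setting $\rho'' := \rho'-\xi\rho'$ (a representation lifting $\rho$ because $-\xi$ is again a $1$-cocycle), the new local cocycles $\delta_s''$ are coboundaries, so by Lemma 1.7, $\rho''|_{\mathrm{Gal}(\bar\eta_s/\eta_s)}$ is strictly equivalent relative to $\phi$ to $P_s'\rho_0 P_s'^{-1}|_{\mathrm{Gal}(\bar\eta_s/\eta_s)}$; further conjugation by $P_s'$ then yields $\rho''|_{\mathrm{Gal}(\bar\eta_s/\eta_s)} \cong \rho_0|_{\mathrm{Gal}(\bar\eta_s/\eta_s)}$, as required. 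Conversely, given a lift $\rho''$ with the required local property, I will recompute $c$ using $\rho' := \rho''$ itself, taking $P_s'$ to be a conjugator in $\mathrm{GL}(A'^r)$ witnessing $\rho''|_{\mathrm{Gal}(\bar\eta_s/\eta_s)} \cong \rho_0|_{\mathrm{Gal}(\bar\eta_s/\eta_s)}$ and setting $P_s := \phi(P_s')$; with these choices $\delta_s = 0$ identically, and by the already-proved independence of choices, $c = 0$.

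For (ii), the computation $\det(\rho'(g)) = (1+\mathrm{Tr}(\delta_s(g)))\det(\rho_0(g))$ displayed just before Lemma 2.2 identifies the local $1$-cocycle attached to the lift $\det(\rho')$ of $\det(\rho)$ as $\mathrm{Tr}(\delta_s)$. Naturality of the global-to-local comparison maps and the induced maps on cokernels under the trace $\mathscr End(\mathscr F)\to F$ then yields the assertion. The step I expect to be most delicate is the independence of $P_s'$: the terms of order $\mathfrak{a}^2$ must be dropped at exactly the right moments, and the resulting expression for $\tilde{\delta}_s-\delta_s$ must be recognized as a coboundary for the $\rho_0$-adjoint action rather than some twisted action, which in turn rests on the observation that on $\mathfrak{a}$-valued matrices the adjoint actions of any lift of $\rho$ and of $\rho_0$ coincide.
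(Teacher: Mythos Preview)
Your proposal is correct and follows essentially the same strategy as the paper: verify that $c$ is well-defined by checking invariance under each choice, then use Lemma 1.7 together with the exact sequence of Lemma 1.8 to establish the vanishing criterion, and read off (ii) from the determinant computation already displayed before the lemma. The only organizational differences are that the paper treats the independence from $P_s$ and $P_s'$ simultaneously via Lemma 1.5(ii) (observing that $\tilde P_s'\rho_0\tilde P_s'^{-1}$ and $P_s'\rho_0 P_s'^{-1}$ are strictly equivalent relative to $\phi$, so differ by a coboundary), whereas you separate the two and use a direct expansion for $P_s'$ and an automorphism-lifting argument for $P_s$; and in the ``only if'' direction the paper keeps the original $P_s'$ and invokes Lemma 1.5(ii) to see that the local cocycle for $\rho''$ is a coboundary, while you more directly re-choose $P_s'$ so that $\delta_s=0$ on the nose.
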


\begin{proof} We have shown (ii) above. Let us prove (i).
Let $\rho'':\pi_1(X-S,\bar\eta)\to \mathrm{GL}(A'^r)$ be another
lifting of $\rho$, and define 1-cocycles
$$\delta_s,\theta_s:\mathrm{Gal}(\bar\eta_s/\eta_s)\to
\mathrm{Ad}(\rho_0)\otimes_F\mathfrak a$$ by
\begin{eqnarray*}
\rho'(g)&=&P_s'\rho_0(g)P_s'^{-1}+\delta_s(g)
P_s'\rho_0(g)P_s'^{-1},\\
\rho''(g)&=&P_s'\rho_0(g)P_s'^{-1}+\theta_s(g)
P_s'\rho_0(g)P_s'^{-1}
\end{eqnarray*}
for all $g\in \mathrm{Gal}(\bar\eta_s/\eta_s)$. Since $\rho'$ and
$\rho''$ are liftings of $\rho$, the continuous map
$$\psi:\pi_1(X-S,\bar\eta)\to\mathrm{Ad}(\rho_0)\otimes_F\mathfrak
a$$ defined by
$$\rho''(g)=\rho'(g)+\psi(g)\rho'(g)\quad
(g\in\pi_1(X-S,\bar\eta))$$ is a 1-cocycle for the group
$\pi_1(X-S,\bar\eta)$. For any $g\in
\mathrm{Gal}(\bar\eta_s/\eta_s)$, we have
\begin{eqnarray*}
(\theta_s(g)-\delta_s(g))P_s'\rho_0(g)P_s'^{-1}&=&\rho''(g)-\rho'(g)\\
&=& \psi(g)\rho'(g)\\
&=&\psi(g)(P_s'\rho_0(g)P_s'^{-1}+\delta_s(g)
P_s'\rho_0(g)P_s'^{-1})\\
&=&\psi(g)P_s'\rho_0(g)P_s'^{-1},
\end{eqnarray*}
where the last equality follows from the fact that $\mathfrak
a^2=0$. It follows that $$\theta_s(g)-\delta_s(g)=\psi(g)$$ for all
$g\in \mathrm{Gal}(\bar\eta_s/\eta_s)$. Hence the cohomology class
$[\theta_s]-[\delta_s]$ is the image of the cohomology class
$[\psi]$ under the canonical homomorphism
$$H^1(\pi_1(X-S,\bar\eta), \mathrm{Ad}(\rho_0)\otimes_F\mathfrak a)
\to H^1(\mathrm{Gal}(\bar\eta_s/\eta_s),
\mathrm{Ad}(\rho_0)\otimes_F\mathfrak a).$$ It follows that
$([\theta_s])_{s\in S}$ and $([\delta_s])_{s\in S}$ define the same
element in the cokernel of the canonical homomorphism
$$H^1(\pi_1(X-S,\bar\eta), \mathrm{Ad}(\rho_0)\otimes_F\mathfrak a)
\to \bigoplus_{s\in S}H^1(\mathrm{Gal}(\bar\eta_s/\eta_s),
\mathrm{Ad}(\rho_0)\otimes_F\mathfrak a).$$ So the obstruction class
to lifting $\rho$ is independent of the choice of the lifting
$\rho'$ of $\rho$.

Choose $\tilde P_s\in \mathrm{GL}(A^r)$ such that $\tilde
P_s^{-1}\rho(g)\tilde P_s=\rho_0(g)$ for all $g\in
\mathrm{Gal}(\bar\eta_s/\eta_s)$ and choose $\tilde P_s'\in
\mathrm{GL}(A'^r)$ lifting $\tilde P_s$. As representations of
$\mathrm{Gal}(\bar\eta_s/\eta_s)$, $\tilde P_s'\rho_0\tilde
P_s'^{-1}$ and $P_s'\rho_0P_s'^{-1}$ are equivalent and hence
strictly equivalent relative to $\phi$ by Lemma 1.5 (ii). Define
$$\delta_s'':\mathrm{Gal}(\bar\eta_s/\eta_s)\to\mathrm{Ad}(\rho_0)\otimes_F
\mathfrak a$$ by
$$\tilde
P_s'\rho_0(g)\tilde
P_s'^{-1}=P_s'\rho_0(g)P_s'^{-1}+\delta_s''(g)P_s'\rho_0(g)P_s'^{-1}$$
for all $g\in \mathrm{Gal}(\bar\eta_s/\eta_s)$. Then $\delta_s''$ is
a 1-coboundary. Define 1-cocycles
$$\delta_s,\tilde\delta_s:\mathrm{Gal}(\bar\eta_s/\eta_s)\to
\mathrm{Ad}(\rho_0)\otimes_F\mathfrak a$$ by
\begin{eqnarray*}
\rho'(g)&=&P_s'\rho_0(g)P_s'^{-1}+\delta_s(g)
P_s'\rho_0(g)P_s'^{-1},\\
\rho'(g)&=&\tilde P_s'\rho_0(g)\tilde P_s'^{-1}+\tilde\delta_s(g)
\tilde P_s'\rho_0(g)\tilde P_s'^{-1}
\end{eqnarray*}
for all $g\in \mathrm{Gal}(\bar\eta_s/\eta_s)$. Then we have
\begin{eqnarray*} \rho'(g)&=&\tilde P_s'\rho_0(g)\tilde
P_s'^{-1}+\tilde\delta_s(g)
\tilde P_s'\rho_0(g)\tilde P_s'^{-1}\\
&=&P_s'\rho_0(g)P_s'^{-1}+\delta_s''(g)P_s'\rho_0(g)P_s'^{-1}+\tilde
\delta_s(g)(P_s'\rho_0(g)P_s'^{-1}+\delta_s''(g)P_s'\rho_0(g)P_s'^{-1})
\\
&=&P_s'\rho_0(g)P_s'^{-1}+(\delta_s''(g)+
\tilde\delta_s(g))P_s'\rho_0(g)P_s'^{-1}.
\end{eqnarray*}
It follows that
$$\delta_s=\delta_s''+\tilde\delta_s$$ and hence $\delta_s$ and
$\tilde\delta_s$ differ by a 1-coboundary. So the obstruction class
to lifting $\rho$ is independent of the choice of $P_s$ and $P_s'$.

Suppose $\rho$ can be lifted to a representation
$\rho'':\pi_1(X-S,\bar\eta)\to \mathrm{GL}(A'^r)$ such that
$\rho''\equiv \rho_0\mod \mathfrak m_A$ and
$\rho''|_{\mathrm{Gal}(\bar\eta_s/\eta_s)}\cong
\rho_0|_{\mathrm{Gal}(\bar\eta_s/\eta_s)}$ for any $s\in S$. Then
$\rho''|_{\mathrm{Gal}(\bar\eta_s/\eta_s)}$ is equivalent to
$P_s'\rho_0|_{\mathrm{Gal}(\bar\eta_s/\eta_s)}P_s'^{-1}$. By Lemma
1.5 (ii), $\rho''|_{\mathrm{Gal}(\bar\eta_s/\eta_s)}$ and
$P_s'\rho_0|_{\mathrm{Gal}(\bar\eta_s/\eta_s)}P_s'^{-1}$ are
strictly equivalent relative to $\phi'$. By Lemma 1.7, the 1-cocycle
$\theta_s$ defined above becomes a 1-coboundary for the group
$\mathrm{Gal}(\bar\eta_s/\eta_s)$. By the above discussion, to
define the obstruction class to lifting $\rho$, we can use the
lifting $\rho''$ instead of the lifting $\rho'$. It follows that the
obstruction class vanishes.

Conversely, suppose the obstruction class $c$ to lifting $\rho$
vanishes. Then we can find a 1-cocycle
$\psi:\pi_1(X-S,\bar\eta)\to\mathrm{Ad}(\rho_0)\otimes_F\mathfrak a$
such that $\psi|_{\mathrm{Gal}(\bar\eta_s/\eta_s)}+\delta_s$ are
1-coboundaries for all $s\in S$. Set
$$\rho''=\rho'+\psi\rho'.$$ Then $\rho''$ is a lifting of $\rho$.
Moreover, for any $g\in \mathrm{Gal}(\bar\eta_s/\eta_s)$, we have
\begin{eqnarray*}
\rho''(g)&=&\rho'(g)+\psi(g)\rho'(g)\\
&=& P_s'\rho_0(g)P_s'^{-1}+\delta_s(g) P_s'\rho_0(g)P_s'^{-1} +
\psi(g)(P_s'\rho_0(g)P_s'^{-1}+\delta_s(g) P_s'\rho_0(g)P_s'^{-1})
\\
&=&
P_s'\rho_0(g)P_s'^{-1}+(\psi(g)+\delta_s(g))P_s'\rho_0(g)P_s'^{-1}.
\end{eqnarray*}
Since $\psi|_{\mathrm{Gal}(\bar\eta_s/\eta_s)}+\delta_s$ is a
1-coboundary for each $s\in S$,
$\rho''|_{\mathrm{Gal}(\bar\eta_s/\eta_s)}$ must be strictly
equivalent to $P_s'\rho_0(g)P_s'^{-1}$ relative to $\phi'$ by Lemma
1.7. In particular, $\rho''|_{\mathrm{Gal}(\bar\eta_s/\eta_s)}$ is
equivalent to $\rho_0|_{\mathrm{Gal}(\bar\eta_s/\eta_s)}$.
\end{proof}

\begin{lemma} Suppose $S$ is nonempty.
Let $\phi:A'\to A$ be an epimorphism in the category $\mathcal C$
such that its kernel $\mathfrak a$ has the property $\mathfrak
m_{A'}\mathfrak a=0$, and let $\rho:\pi_1(X-S,\bar\eta)\to
\mathrm{GL}(A^r)$ be a representation such that $\rho\equiv
\rho_0\mod \mathfrak m_A$ and
$\rho|_{\mathrm{Gal}(\bar\eta_s/\eta_s)}\cong
\rho_0|_{\mathrm{Gal}(\bar\eta_s/\eta_s)}$ for any $s\in S$. If the
rank $r$ of $\rho_0$ is $1$, then the obstruction class to lifting
$\rho$ vanishes.
\end{lemma}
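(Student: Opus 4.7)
For $r = 1$, the group $\mathrm{GL}(A^1) = A^\times$ is abelian; conjugation is trivial, so the hypothesis $\rho|_{\mathrm{Gal}(\bar\eta_s/\eta_s)}\cong\rho_0|_{\mathrm{Gal}(\bar\eta_s/\eta_s)}$ reads as the strict equality $\rho|_{\mathrm{Gal}(\bar\eta_s/\eta_s)}=\rho_0|_{\mathrm{Gal}(\bar\eta_s/\eta_s)}$. By Lemma 2.2(i), vanishing of the obstruction class is equivalent to producing a homomorphism lift $\rho'\colon\pi_1(X-S,\bar\eta)\to A'^\times$ of $\rho$ whose restriction to each $\mathrm{Gal}(\bar\eta_s/\eta_s)$ again equals $\rho_0|_{\mathrm{Gal}(\bar\eta_s/\eta_s)}$. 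My plan is to construct such a $\rho'$ explicitly.

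Introduce the twist $\chi:=\rho\cdot\rho_0^{-1}\colon\pi_1(X-S,\bar\eta)\to 1+\mathfrak m_A$. The local equality forces $\chi|_{\mathrm{Gal}(\bar\eta_s/\eta_s)}=1$ for every $s\in S$, and since the images of the local Galois groups normally generate the kernel of the natural surjection $\pi_1(X-S,\bar\eta)\to\pi_1(X,\bar\eta)$, $\chi$ descends to a continuous character $\bar\chi\colon\pi_1(X,\bar\eta)\to 1+\mathfrak m_A$. It then suffices to produce a continuous lift $\bar\chi'\colon\pi_1(X,\bar\eta)\to 1+\mathfrak m_{A'}$: pulling $\bar\chi'$ back to $\pi_1(X-S,\bar\eta)$ and setting $\rho':=\bar\chi'\cdot\rho_0$ gives the desired lift, because $\rho'\equiv\rho\pmod{\mathfrak a}$ by construction, and since each $\mathrm{Gal}(\bar\eta_s/\eta_s)$ has trivial image in $\pi_1(X,\bar\eta)$ we automatically obtain $\bar\chi'|_{\mathrm{Gal}(\bar\eta_s/\eta_s)}=1$ and hence $\rho'|_{\mathrm{Gal}(\bar\eta_s/\eta_s)}=\rho_0|_{\mathrm{Gal}(\bar\eta_s/\eta_s)}$.

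The group $1+\mathfrak m_A$ carries a natural $\mathbb Z_\ell$-module structure: in positive residue characteristic it is $\ell$-power torsion, while in characteristic zero the nilpotency of $\mathfrak m_A$ turns $\exp$ and $\log$ into mutually inverse polynomial bijections, giving $1+\mathfrak m_A\cong(\mathfrak m_A,+)$ as topological groups. The compact image of $\bar\chi$ is therefore contained in a pro-$\ell$ subgroup, so $\bar\chi$ factors through the maximal pro-$\ell$ abelian quotient of $\pi_1(X,\bar\eta)$. For a smooth projective connected curve $X$ of genus $g$ over an algebraically closed field of characteristic $p\neq\ell$, geometric class field theory identifies this quotient with the $\ell$-adic Tate module $T_\ell(\mathrm{Jac}\,X)\cong\mathbb Z_\ell^{2g}$, which is \emph{free} of rank $2g$ as a $\mathbb Z_\ell$-module. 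Freeness is the essential input: choosing a $\mathbb Z_\ell$-basis, selecting an arbitrary preimage in $1+\mathfrak m_{A'}$ of each basis element's image under $\bar\chi$ (possible because $1+\mathfrak m_{A'}\to 1+\mathfrak m_A$ is surjective), and extending $\mathbb Z_\ell$-linearly yields the required $\bar\chi'$. I expect the main technical point to be the characteristic-zero case, where one must pass to a compact $\mathbb Z_\ell$-integral model inside $1+\mathfrak m_{A'}$ to make the pro-$\ell$ analysis rigorous and to ensure that the $\mathbb Z_\ell$-linear extension is continuous; once this is set up, the construction and verification are essentially formal.
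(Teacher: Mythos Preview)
Your proof is correct and follows essentially the same strategy as the paper: twist by $\rho_0^{-1}$ to reduce to lifting a character $\bar\chi$ of $\pi_1(X,\bar\eta)$ with values in $1+\mathfrak m_A$, observe that any such character factors through the pro-$\ell$ abelianization $\mathbb Z_\ell^{2g}$, and exploit the freeness of this module to produce the lift. The only cosmetic difference is that the paper packages the last step by explicitly identifying the universal deformation ring of the trivial character of $\pi_1(X,\bar\eta)$ with $F[[t_1,\dots,t_{2g}]]$ (hence smooth), whereas you lift $\bar\chi$ directly by choosing preimages of a $\mathbb Z_\ell$-basis; the paper also handles the $F=\overline{\mathbb Q}_\ell$ and $F=\overline{\mathbb F}_\ell$ cases by first descending $A$ to an Artinian algebra over a finite extension $E$, which is exactly the ``integral model'' maneuver you flag as the remaining technical point.
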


\begin{proof} By Lemma 2.2 (i), it suffices to show that $R(A')\to
R(A)$ is surjective. For any $A\in\mathrm{ob}\,\mathscr C$, let
$R'(A)$ be the set of representations $\rho:\pi_1(X,\bar\eta)\to
A^\ast$ such that $\rho\equiv 1\mod \mathfrak m_A$. (Here we work
with representations of $\pi_1(X,\bar\eta)$, not those of
$\pi_1(X-S,\bar\eta)$. Recall that two rank $1$ representations are
equivalent if and only if they are equal.) Note that $R'(A)$ can be
identified with the set of rank $1$ representations $\rho$ of
$\pi_1(X-S,\bar \eta)$ such that $\rho\equiv 1\mod \mathfrak m_A$
and $\rho|_{\mathrm{Gal}(\bar\eta_s/\eta_s)}=1$ for all $s\in S$.
$R'$ is a functor from $\mathcal C$ to the category of sets, and we
have an isomorphism of functors $R'\stackrel\cong\to R$ defined by
the map
$$R'(A)\stackrel\cong\to R(A),\quad \rho\mapsto \rho\rho_0$$ for each
$A\in\mathrm{ob}\,\mathcal C$. Let us prove $R'(A')\to R'(A)$ is
surjective for any epimorphism $A'\to A$ in $\mathcal C$, that is,
the functor $R'$ is smooth. It suffices to prove the universal
deformation ring $R'_{\mathrm{univ}}$ for the trivial representation
$1:\pi_1(X,\bar \eta)\to F^\ast$ is a formal power series ring. Let
$A$ be an object in $\mathcal C$, and let $\rho:\pi_1(X,\bar\eta)\to
A^\ast$ be a representation such that $\rho\equiv 1\mod \mathfrak
m_A$. Then the image of $\rho$ is contained in the subgroup
$1+\mathfrak m_A$ of $A^\ast$. This subgroup has a filtration
$$1+\mathfrak m_A\supset 1+\mathfrak m_A^2\supset \cdots.$$ For each $i$,
we have an isomorphism of groups
$$\mathfrak m_A^i/\mathfrak m_A^{i+1}\cong
(1+\mathfrak m_A^i)/(1+\mathfrak m_A^{i+1}),$$ and $\mathfrak
m_A^i/\mathfrak m_A^{i+1}$ is the underlying abelian group of a
finite dimensional vector space over $F$. In the case where $F$ is a
finite extension of $\mathbb F_\ell$ or $\mathbb Q_\ell$, any
profinite subgroup of a finite dimensional $F$-vector space must be
a pro-$\ell$-group. It follows that the representation
$\rho:\pi_1(X,\bar\eta)\to A^\ast$ must factor through the
pro-$\ell$-completion of $\pi_1(X,\bar\eta)$ in this case. In the
case where $F=\overline {\mathbb F}_\ell$ or $\overline{\mathbb
Q}_\ell$, we can find a finite extension $E$ of $\mathbb F_\ell$ or
$\mathbb Q_\ell$ and a local Artinian $E$-algebra $A_E$ with residue
field $E$ such that $A_E\otimes_EF\cong A$ and such that
$\rho:\pi_1(X,\bar\eta)\to A^\ast$ factors through
$\pi_1(X,\bar\eta)\to A_E^\ast$. The above discussion shows that
$\rho:\pi_1(X,\bar\eta)\to A^\ast$ again factors through the
pro-$\ell$-completion of $\pi_1(X,\bar\eta)$. As the representation
is of rank $1$, $\rho$ factors through the abelianzation $\Gamma$ of
the pro-$\ell$-completion of $\pi_1(X,\bar\eta)$. By \cite[X
3.10]{SGA1}, the pro-$\ell$-completion of $\pi_1(X,\bar\eta)$ is
isomorphic to the pro-$\ell$-completion of the group with generators
$s_i, t_i$ $(1\leq i\leq g=\mathrm{genus}(X))$ and with one relation
$$(s_1t_1s_1^{-1}t_1^{-1})\cdots(s_gt_gs_g^{-1}t_g^{-1})=1.$$
It follows that $\Gamma\cong\mathbb Z_\ell^{2g}$. For any
nonnegative integer $m$,
$F[[t_1,\ldots,t_{2g}]]/(t_1,\ldots,t_{2g})^m$ is a finite
dimensional vector space over $F$, and hence has a natural topology
inherited from the topology on $F$. Endow $F[[t_1,\ldots,
t_{2g}]]=\varprojlim_m F[[t_1,\ldots,t_{2g}]]/(t_1,\ldots,t_{2g})^m$
with the projective limit topology. Then the homomorphism
$$\gamma:\mathbb Z^{2g}\to (F[[t_1,\ldots, t_{2g}]])^\ast,\quad (\lambda_1,\ldots,
\lambda_{2g})\to
(1+t_1)^{\lambda_1}\cdots(1+t_{2g})^{\lambda_{2g}}$$ is continuous
if we put the $\ell$-adic topology on $\mathbb Z^{2g}$. So it
induces a continuous homomorphism
$$\mathbb Z_\ell^{2g}\to (F[[t_1,\ldots, t_{2g}]])^\ast.$$
One can then verify the ring $F[[t_1,\ldots, t_{2g}]]$ together with
the representation $\pi_1(X,\bar\eta)\to (F[[t_1,\ldots,
t_{2g}]])^\ast$ defined by the composite
$$\pi_1(X,\bar\eta)\to\Gamma\cong \mathbb Z_\ell^{2g}\stackrel\gamma\to (F[[t_1,\ldots,
t_{2g}]])^\ast$$ satisfies the universal property required for the
universal deformation ring and the universal deformation. So the
universal deformation ring is isomorphic to a formal power series
ring. This finishes the proof of the lemma.
\end{proof}

\begin{proof}[Proof of Theorem 0.1 (iii)] Note that the pairing
$$\mathscr End(\mathscr F)\times \mathscr End(\mathscr F)\to
F,\quad (\phi,\psi)\mapsto \mathrm{Tr}(\psi\circ \phi)$$ defines a
self-duality on $\mathscr End(\mathscr F)$. By the duality theorem
(\cite[1.3 and 2.2]{Ddualite}), we have a perfect pairing
$$H^2(X,j_\ast\mathscr End(\mathscr F))\times H^0(X,j_\ast\mathscr
End(\mathscr F)(1))\to F.$$ If all elements in
$\mathrm{End}_{F[\pi_1(X-S,\bar\eta)]}(F^r)$ are scalar
multiplications, then we have
$$F\cong\mathrm{End}(\mathscr
F)\cong H^0(X,j_\ast\mathscr End(\mathscr F)).$$ So the morphism
$$F\to\mathscr End(\mathscr F),\quad a\mapsto aI$$ induces an
isomorphism
$$H^0(X,F)\cong H^0(X,j_\ast\mathscr End(\mathscr F)).$$
This implies that $\mathrm{Tr}:\mathscr End(\mathscr F)\to F$
induces an isomorphism
$$H^2(X,j_\ast\mathscr End(\mathscr F))\stackrel\cong\to H^2(X,F).$$
First consider the case where $S$ is nonempty. By Lemma 2.2, this
last isomorphism maps the obstruction class to lifting a deformation
of $\rho_0$ to the obstruction class to lifting a corresponding
deformation of $\mathrm{det}(\rho_0)$. By Lemma 2.3, there is no
obstruction to lifting a deformation of $\mathrm{det}(\rho_0)$. It
follows that there is no obstruction to lifting a deformation of
$\rho_0$. Hence the functor $R$ is smooth, and the universal
deformation ring $R_{\mathrm{univ}}$ is isomorphic to the formal
power series ring $F[[t_1,\ldots, t_m]]$ with $m=\mathrm{dim}_F
H^1(X, j_\ast \mathscr End(\mathscr F)).$

Next we consider the case where $S$ is empty. Fix a closed point
$\infty$ in $X$. For any $A\in\mathrm{ob}\,\mathcal C$, let $R'(A)$
be the set of strict equivalent classes of representations
$\rho:\pi_1(X-\{\infty\},\bar\eta)\to \mathrm{GL}(A^r)$ such that
$\rho\equiv\rho_0\mod\mathfrak m_A$ and
$\rho|_{\mathrm{Gal}(\bar\eta_{\infty}/\eta_\infty)}\cong
\rho_0|_{\mathrm{Gal}(\bar\eta_{\infty}/\eta_\infty)}$. Note that
$\rho_0|_{\mathrm{Gal}(\bar\eta_{\infty}/\eta_\infty)}$ is the
trivial representation of rank $r$, and the functor $R$ is
isomorphic to $R'$. We are thus reduced to the case where
$S=\{\infty\}$ is nonempty.
\end{proof}

\end{document}